\newcommand{\norm}[1]{\left\| #1 \right\|}
\newcommand{\av}[1]{\left| #1 \right|}
\newcommand{\RR}{\mathbb R}
\newcommand{\pa}{\partial}
\newtheorem{satz}{Proposition}[section]
\newtheorem{lem}[satz]{Lemma} 
\newtheorem{theorem}{Theorem}
\newtheorem{prop}[satz]{Proposition} 
\newtheorem{lemma}[satz]{Lemma} 
\theoremstyle{definition}
\newtheorem{rem}{Remark}
\pgfplotsset{
    every axis/.append style={
            axis x line=middle,
            axis y line=middle,
            xlabel={$x$},
            ylabel={$y$},
            axis line style={->},
        },
    marya/.style={color=black,thick,mark=none},
    soldot/.style={color=black,only marks,mark=*},
    holdot/.style={color=black,fill=white,only marks,mark=*},
    grid style={dotted,gray},
}
\tikzset{>=stealth}
\title[Asymptotic models for cold plasma equation]{Derivation and well-posedness for asymptotic models of cold plasmas}
\author[D. Alonso-Or\'{a}n]{Diego Alonso-Or\'{a}n}
\email{dalonsoo@ull.edu.es}
\address{Departamento de An\'{a}lisis Matem\'{a}tico y Instituto de Matem\'{a}ticas y Aplicaciones (IMAULL), Universidad de La Laguna C/. Astrof\'{i}sico Francisco S\'{a}nchez s/n, 38200 - La Laguna, Spain.}
\author[A. Dur\'{a}n]{Angel Dur\'{a}n}
\email{angeldm@uva.es}
\address{Applied Mathematics Department, University of Valladolid, 47011 Valladolid, Spain}
\author[R. Granero-Belinch\'{o}n]{Rafael Granero-Belinch\'{o}n}
\email{rafael.granero@unican.es}
\address{Departamento  de  Matem\'aticas,  Estad\'istica  y  Computaci\'on,  Universidad  de Cantabria.  Avda.  Los  Castros  s/n,  Santander,  Spain.}
\begin{document}
\begin{abstract}
In this paper we derive three new asymptotic models for a hyperbolic-hyperbolic-elliptic system of PDEs describing the motion of a collision-free plasma in a magnetic field.  The first of these models takes the form of a non-linear and non-local Boussinesq system (for the ionic density and velocity) while the second is a non-local wave equation (for the ionic density). Moreover, we derive a unidirectional asymptotic model of the latter which is closely related to the well-known Fornberg-Whitham equation. We also provide the well-posedness of these asymptotic models in Sobolev spaces. To conclude, we demonstrate the existence of a class of initial data which exhibit wave breaking for the unidirectional model. 
\end{abstract}

\subjclass{35R35, 35Q35, 35S10, 76B03}
\keywords{Cold plasma  asymptotic model, nonlocal wave equation, well-posedness, wave-breaking}


\maketitle
{\small
\tableofcontents}

\allowdisplaybreaks

\section{Introduction}
The motion of a cold plasma in a magnetic field consisting of singly-charged particles can be described by the following system of PDEs \cite{berezin1964theory, gardner1960similarity}
\begin{subequations}\label{eq:1}
\begin{align}
n_t+(un)_x&=0,\\
u_t+uu_x+\frac{bb_x}{n}&=0,\\
b-n-\left(\frac{b_x}{n}\right)_x&=0
\end{align}
\end{subequations}
where $n,u$ and $b$ are the ionic density, the ionic velocity and the magnetic field, respectively.  Moreover, it has also been used as a simplified model to describe the motion of collission-free two fluid model where the electron inertial, charge separation and displacement current are neglected and the Poisson equation (\ref{eq:1}c) is initially satisfied, \cite{berezin1964theory, kakutani1968reductive}. In (\ref{eq:1}) the spatial domain $\Omega$ is either $\Omega=\mathbb{R}$ or $\Omega=\mathbb{S}^1$ (\emph{i.e.} $x\in\mathbb{R}$ or $x\in [-\pi,\pi]$ with periodic boundary conditions) and the time variable satisfies $t\in [0,T]$ for certain $0<T\leq\infty$. The corresponding initial-value problem (ivp) consists of the system \eqref{eq:1} along with initial conditions
\begin{equation}\label{initialdata}
n(x,0)=n_0(x),\;u(x,0)=u_0(x),
\end{equation}
which are assumed to be smooth enough for the purposes of the work.

System \eqref{eq:1} was introduced by Gardner \& Morikawa \cite{gardner1960similarity}. Furthermore, Gardner \& Morikawa formally showed that the solutions of \eqref{eq:1} converge to solutions of the Korteveg-de Vries equation (see also the paper by Su \& Gardner \cite{su1969korteweg}). Berezin \& Karpman extended this formal limit to the case where the wave propagates at certain angles with respect to the magnetic field \cite{berezin1964theory}, \emph{i.e.} for angles satisfying certain size conditions. Later on, Kakutani, Ono, Taniuti \& Wei \cite{kakutani1968reductive} removed the hypothesis on the angle. This formal KdV limit was recently justified by Pu \& Li \cite{pu2019kdv}.

\subsection{Contributions and main results}
The purpose of this paper is two-fold. First, we derive three asymptotic models for the hyperbolic-hyperbolic-elliptic system of PDEs describing the motion of a collision-free plasma in a magnetic field given in \eqref{eq:1}. The method to obtain the new asymptotic models relies on a multi-scale expansion (cf.  e.~g. \cite{AloranWaves, CL14,CL15,CLS12,CGSW18,granero2018asymptotic,GS}) which reduces the full system \eqref{eq:1} to a cascade of linear equations which can be closed up to some order of precision. 

More specifically, writing
\begin{equation}\label{adg1}
n=1+N,\;U=u,\; b=1+B,
\end{equation}
and, for $\varepsilon>0$, introducing the formal expansions
\begin{equation}\label{adg2}
N=\sum_{\ell=0}^\infty \varepsilon^{\ell+1} N^{(\ell)},\;B=\sum_{\ell=0}^\infty \varepsilon^{\ell+1} B^{(\ell)},\;U=\sum_{\ell=0}^\infty \varepsilon^{\ell+1} U^{(\ell)},
\end{equation}
then the first model is an $O(\epsilon^{2})$ approximation of \eqref{eq:1} and takes the form of 
the following Boussinesq type system 
\begin{subequations}\label{eq:Boussinesq}
\begin{align}
h_t+(hv)_x+v_x&=0,\\
v_t+vv_x+\left[\mathscr{L},\mathscr{N}h\right]h+\mathscr{N}h&=0,
\end{align}
\end{subequations}
for $h=\varepsilon N^{(0)}+\varepsilon^2 N^{(1)},\;v=\varepsilon U^{(0)}+\varepsilon^2 U^{(1)}$. The nonlocal terms in (\ref{eq:Boussinesq}) are given by
\begin{equation}\label{adg3}
\mathscr{L}=-\partial_x^2(1-\partial_x^2)^{-1},\;
\mathscr{N}=\partial_x(1-\partial_x^2)^{-1}\;(\text{so }\partial_x\mathscr{N}=-\mathscr{L}),
\end{equation}
which are Fourier multiplier operators with symbols
\begin{equation}\label{adg4}
\widehat{\mathscr{L}h}(\xi)=\frac{\xi^2}{1+\xi^2}\hat{h}(\xi),
\; \widehat{\mathscr{N}h}(\xi)=\frac{i\xi}{1+\xi^2}\hat{h}(\xi).
\end{equation}
(where $\widehat{h}(\xi)$ denotes the Fourier transform of $h$ at $\xi$) and $\left[\mathscr{L},\cdot\right]\cdot$ denotes the commutator
\begin{equation}\label{adg4b}
\left[\mathscr{L},f\right]g=\mathscr{L}(fg)-f\mathscr{L}g.
\end{equation}
The extra assumption $U^{(0)}=N^{(0)}$ in (\ref{adg2}) leads to the formal derivation of the second asymptotic model, as a bidirectional single non-local wave equation
\begin{align}\label{wave}
h_{tt}+\mathscr{L}h&=\left(hh_x+\left[\mathscr{L},\mathscr{N}h\right]h\right)_x-2\left(hh_t\right)_x.
\end{align}

The formal reduction of (\ref{wave}) to the corresponding unidirectional version, cf. \cite{Whitham74}, yields
\begin{equation}\label{wave:uni}
h_{t}=-\frac{1}{2}\left(3 hh_{x}-[\mathscr{L},\mathscr{N}h]h - \mathscr{N}h-h_{x} \right),
\end{equation}
being the third asymptotic model introduced in the present paper. We note that the
unidirectional equation \eqref{wave:uni} has strong similarities with the well-known equation
\begin{equation}\label{FW}
u_{t}+\frac{3}{2} uu_{x}= \mathscr{N}u,
\end{equation}
proposed by Fornberg \& Whitham as a model for breaking waves \cite{Fornberg-Whitham-78}. The latter equation has been intensively studied during the last decades and several results regarding the well-posedness of the ivp in different functional spaces as well as various wave-breaking criteria have appeared in the literature \cite{Itasaka-2021, Yang-2021, Haziot-2017, Hormann-2021,HolmesThompson-2017,Holmes-2016}. A significant difference between the structure of equation \eqref{FW} and the unidirectional equation \eqref{wave:uni} derived in the present paper is the emergence of the nonlocal commutator-type term. 

The second purpose of this work is the study of several analytical properties of the models (\ref{eq:Boussinesq}), (\ref{wave}), and (\ref{wave:uni}). They are mainly concerned with the existence of conserved quantities, well-posedness (in the sense of existence and uniqueness of solutions of the corresponding ivp), and the formation of special solutions. This paper will focus on the first two points, while the existence and dynamics of solitary-wave solutions will be the object of a separate forthcoming study.

Specifically, the results shown in this paper can be summarized as follows:
\begin{itemize}
\item The system (\ref{eq:Boussinesq}) and the unidirectional model (\ref{wave:uni}) admit several quantities preserved by the solutions in suitable spaces, including  a Hamiltonian structure. On the other hand, (\ref{wave}) can be written in a conservation form, leading in a natural way to the existence of a conserved quantity.
\item The system (\ref{eq:Boussinesq}) is locally well posed on a modified Sobolev space involving the operator $\mathscr{L}$.
\item The bidirectional non-local wave equation \eqref{wave} has a unique local solution close to the equilibrium and for initial data with sufficiently small $L^{\infty}$ norm.
\item The ivp for the equation \eqref{wave:uni} is locally well-posed in Sobolev spaces $H^{s}(\RR)$ for $s>\frac{3}{2}$. Furthermore, a blow-up criterion for the solution by means of a logarithmic Sobolev inequality is provided. In addition, smooth solutions of \eqref{wave:uni} are shown to exhibit  wave breaking under a suitable hypothesis on the initial condition.
\end{itemize}

\subsection{Structure of the paper} 
In Section \ref{sec:deri} we present the asymptotic derivation of the three models studied in this paper from the motion of a cold plasma by means of a multi-scale expansion. Section \ref{sec:quantities} is devoted to the study of conservation properties of the models derived in Section \ref{sec:deri}. Focused on well-posedness, the nonlocal Boussinesq system (\ref{eq:Boussinesq}) is analyzed in Section \ref{sec:nonlocalBouss}, while local existence for the bidirectional non-local wave equation \eqref{wave} close to the equilibrium state is studied in Section \ref{sec:bidirec}. Concerning the unidirectional model \eqref{wave:uni}, well-posedness and blow-up criteria for the solutions are derived in Section \ref{sec:uni:wp}. These results are finished off in Section \ref{sec:breaking}, where wave breaking of some smooth solutions for the unidirectional model is shown.

\subsection{Preliminaries and notation}
Let us next introduce some notation that will be used throughout the rest of the paper.  

\subsubsection*{\textsf{The functional spaces}}
For $1\leq p\leq\infty$, let $L^{p}=L^{p}(\RR)$ be the usual normed space of $L^{p}$-functions on $\RR$ with $||\cdot ||_{p}$ as the associated norm. For $s\in\RR$, the inhomogeneous Sobolev space $H^{s}=H^s(\RR)$ is defined as
	\begin{align*}
		H^s(\RR)\triangleq\left\{f\in L^2(\RR):\|f\|_{H^s(\RR)}^2=\int_\RR(1+\xi^2)^s|\widehat{f}(\xi)|^2<+\infty\right\},
	\end{align*}
with norm 
\[ \norm{f}_{H^s}^{2}=\norm{f}_{L^2}^2+\norm{f}_{\dot{H}^s}^{2}, \]
where $\norm{f}_{\dot{H}^s}=\norm{\Lambda^{s}f}_{L^2}$ and $\Lambda^{s}$ is defined by
$\widehat{\Lambda^sf}(\xi)=|\xi|^{s}\widehat{f}(\xi)$, where $\widehat{f}$ is the Fourier transform of $f$.

The space of functions with bounded mean oscillation $\text{BMO}(\RR)$ (cf. \cite{Stein1, Stein2}) is defined by
\begin{align*}
		\text{BMO}(\RR)\triangleq\left\{f\in L^{1}_{\text{loc}}(\RR):\|f\|_{\text{BMO}(\RR)}=\displaystyle\sup_{r>0,x_{0}\in\RR}\int_{x_{0}-r}^{x_{0}+r} \av{f(x)-\bar{f}(x)}\  dx<+\infty\right\},
	\end{align*}
where $\bar{f}(x)=\frac {1}{2r}\int_{x_{0}-r}^{x_{0}+r} f(y) \ dy$.\\	

Next, let us introduce two lemmas with useful estimates regarding Sobolev spaces. The first one deals with the so called Kato-Ponce commutator estimate.
\begin{lem}[\cite{Kato-Ponce-1988-CPAM, Kenig-Ponce-Vega-1991-JAMS}]\label{Kato-Ponce commutator estimate}
		If $f,g\in H^s\bigcap W^{1,\infty}$ with $s>0$, then for $p,p_i\in(1,\infty)$ with $i=1,\ldots,4$ and 
		$\frac{1}{p}=\frac{1}{p_1}+\frac{1}{p_2}=\frac{1}{p_3}+\frac{1}{p_4}$, we have
		$$
		\|\left[\Lambda^s,f\right]g\|_{L^p}\leq C_{s,p}(\|\pa_{x} f\|_{L^{p_1}}\|\Lambda^{s-1}g\|_{L^{p_2}}+\|\Lambda^sf\|_{L^{p_3}}\|g\|_{L^{p_4}}),$$
		and
		$$\|\Lambda^s(fg)\|_{L^p}\leq C_{s,p}(\|f\|_{L^{p_1}}\|\Lambda^s g\|_{L^{p_2}}+\|\Lambda^s f\|_{L^{p_3}}\|g\|_{L^{p_4}}).$$
	\end{lem}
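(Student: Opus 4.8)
The statement is the classical Kato--Ponce (fractional Leibniz and commutator) inequality, so the plan is to recover it by Littlewood--Paley theory and bilinear (Coifman--Meyer) multiplier bounds rather than by anything specific to the cold-plasma setting. I would fix a dyadic partition of unity, write $\Delta_j$ for the Fourier projection onto $|\xi|\sim 2^j$ and $S_{j}=\sum_{k\le j}\Delta_k$ for the associated low-frequency cutoff, and decompose every product by Bony's paraproduct
$$fg=\sum_j S_{j-2}f\,\Delta_j g+\sum_j \Delta_j f\,S_{j-2}g+\sum_{|j-k|\le 1}\Delta_j f\,\Delta_k g,$$
so that each summand is frequency localized and both estimates reduce to controlling three families of bilinear pieces: low--high, high--low, and high--high (resonant).

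For the fractional Leibniz rule $\|\Lambda^s(fg)\|_{L^p}$ I would simply track where the $s$ derivatives land. In the low--high piece $\Lambda^s$ acts on a function of frequency $\sim 2^j$, so it is comparable to $\Lambda^s\Delta_j g$ while the factor $S_{j-2}f$ is estimated in $L^{p_1}$; summing in $j$ with the Littlewood--Paley square function and Hölder gives the $\|f\|_{L^{p_1}}\|\Lambda^sg\|_{L^{p_2}}$ term. The high--low piece is symmetric and produces $\|\Lambda^s f\|_{L^{p_3}}\|g\|_{L^{p_4}}$, and the high--high piece, after noting that its output frequency is $\lesssim 2^j$ so that $\Lambda^s$ costs at most $2^{js}$, is summable into either of the two terms.

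The commutator estimate is the genuinely delicate point, and the whole gain of one derivative lives in the low--high regime. Writing $a(\xi)=|\xi|^s$ and looking at $[\Lambda^s,S_{j-2}f]\Delta_j g$, the symbol of the commutator is $a(\xi)-a(\xi-\eta)$ with $|\eta|\lesssim 2^{j-2}\ll|\xi-\eta|\sim 2^j$; a first-order Taylor expansion $a(\xi)-a(\xi-\eta)=\na a(\xi-\eta)\cdot\eta+O(|\na^2a|\,|\eta|^2)$ exhibits exactly the structure $\partial_x f\otimes \Lambda^{s-1}g$, because the factor $\eta$ is a derivative falling on $f$ while $\na a(\xi-\eta)\sim|\xi-\eta|^{s-1}$ is $\Lambda^{s-1}$ acting on $g$. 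The resulting bilinear symbol is a Coifman--Meyer multiplier, hence bounded $L^{p_1}\times L^{p_2}\to L^p$, and this is what yields the $\|\partial_x f\|_{L^{p_1}}\|\Lambda^{s-1}g\|_{L^{p_2}}$ contribution. In the high--low and high--high regimes no cancellation is needed: the top frequency sits on $f$, so $\Lambda^s$ effectively hits $f$ and one directly obtains the $\|\Lambda^s f\|_{L^{p_3}}\|g\|_{L^{p_4}}$ term.

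The main obstacle is making the derivative gain rigorous and uniform: one must justify the Taylor expansion of the non-smooth symbol $|\xi|^s$ away from the origin, verify the Coifman--Meyer hypotheses for the resulting bilinear multipliers, and then sum the dyadic pieces using almost-orthogonality together with vector-valued (Fefferman--Stein, maximal-function) inequalities to cover the full range $p,p_i\in(1,\infty)$. The hypotheses $f,g\in H^s\cap W^{1,\infty}$ are precisely what guarantee that the borderline $L^\infty$ and $\partial_x$ norms appearing after Bernstein's inequality are finite, so once the bilinear bounds are in place the summation is routine and the two stated inequalities follow.
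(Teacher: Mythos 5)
This lemma is not proved in the paper at all: it is imported verbatim from the cited references \cite{Kato-Ponce-1988-CPAM, Kenig-Ponce-Vega-1991-JAMS}, so there is no internal proof to compare against, and your sketch has to be judged against the literature. On that score it is sound: the Bony paraproduct decomposition, the gain of one derivative in the low--high regime via first-order Taylor expansion of the symbol $|\xi|^{s}$ (legitimate there because $|\xi|\sim|\xi-\eta|\sim 2^{j}$ keeps you away from the origin, where $|\xi|^{s}$ fails to be smooth), the Coifman--Meyer bilinear multiplier theorem, and vector-valued square-function/maximal estimates to sum the dyadic pieces over the full range $p,p_{i}\in(1,\infty)$ is exactly the modern standard proof of both inequalities (in the style of Kenig--Ponce--Vega and of later systematic treatments such as Grafakos--Oh), and it is in fact better adapted than the original Kato--Ponce argument to the homogeneous operator $\Lambda^{s}=|\nabla|^{s}$ actually used here, since Kato--Ponce worked with the inhomogeneous Bessel potential $J^{s}=(1-\Delta)^{s/2}$ and organized the proof around a Coifman--Meyer-type lemma rather than an explicit frequency decomposition.

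Two small points deserve to be made explicit if you flesh this out. First, in the high--low part of the commutator you dismiss cancellation too quickly: the piece $\Delta_{j}f\,\Lambda^{s}S_{j-2}g$ leaves $s$ derivatives sitting on the low-frequency factor, and one must transfer the weight $2^{(j-2)s}\lesssim 2^{js}$ onto $f$ to produce the $\|\Lambda^{s}f\|_{L^{p_{3}}}\|g\|_{L^{p_{4}}}$ term; this is harmless but is a step, not a triviality. Second, the summation of the high--high (resonant) pieces, whose output can live at frequencies far below $2^{j}$, genuinely uses $s>0$ for the geometric series over output scales --- this is where the hypothesis of the lemma enters the proof, and it is worth saying so rather than folding it into ``routine summation.'' Neither point is a gap in the approach, only in the level of detail of the sketch.
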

The second gives a logarithmic Sobolev inequality.
\begin{lem}[\cite{Dong-2008-JFA}]\label{Sob:log}
Let $s>\frac{1}{2}$. There exists a constant $C=C(s)>0$ such that
\begin{equation*}
\norm{f}_{L^{\infty}}\leq C\left(1+\norm{f}_{\text{BMO}}\big[1+\displaystyle\log(1+\norm{f}_{H^{s}(\RR)}) \big] \right),
\end{equation*}
holds for all $f\in H^{s}.$
\end{lem}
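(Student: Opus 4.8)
The plan is to prove the inequality by a Littlewood--Paley decomposition, bounding the low, intermediate and high frequencies separately and then optimizing over the cutoff scales. Writing $f=\sum_{j\in\ZZ}\dot\Delta_j f$ in homogeneous dyadic blocks, where $\dot\Delta_j$ is the Fourier projector onto the annulus $\{2^{j-1}\le \av{\xi}\le 2^{j+1}\}$, I would fix two integers $M,N\geq 0$ (to be chosen at the end in terms of the norms of $f$) and split
\begin{equation*}
\norm{f}_{L^\infty}\leq \sum_{j<-M}\norm{\dot\Delta_j f}_{L^\infty}+\sum_{-M\leq j\leq N}\norm{\dot\Delta_j f}_{L^\infty}+\sum_{j>N}\norm{\dot\Delta_j f}_{L^\infty}=:I+II+III.
\end{equation*}
This is the classical Kozono--Taniuchi-type route to such a logarithmic bound.

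For the high frequencies $III$ the key tool is the Bernstein inequality $\norm{\dot\Delta_j f}_{L^\infty}\leq C2^{j/2}\norm{\dot\Delta_j f}_{L^2}$ on $\RR$, followed by Cauchy--Schwarz in $j$:
\begin{equation*}
III=\sum_{j>N}2^{j(1/2-s)}\,2^{js}\norm{\dot\Delta_j f}_{L^2}\leq \Big(\sum_{j>N}2^{j(1-2s)}\Big)^{1/2}\norm{f}_{\dot{H}^s}\leq C_s\, 2^{N(1/2-s)}\norm{f}_{H^s},
\end{equation*}
where the geometric series converges precisely because $s>\tfrac12$ and I have used the Littlewood--Paley characterization of $\dot{H}^s$. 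For the intermediate block $II$ I would invoke the continuous embedding $\text{BMO}(\RR)\hookrightarrow \dot B^0_{\infty,\infty}(\RR)$, i.e. the uniform bound $\norm{\dot\Delta_j f}_{L^\infty}\leq C\norm{f}_{\text{BMO}}$ valid for every $j\in\ZZ$ (which follows from the mean-zero property of the convolution kernel of $\dot\Delta_j$ together with the definition of the $\text{BMO}$ norm); summing over the $M+N+1$ indices gives $II\leq C(M+N+1)\norm{f}_{\text{BMO}}$. Finally, for the very low frequencies $I$, Bernstein again yields $\norm{\dot\Delta_j f}_{L^\infty}\leq C2^{j/2}\norm{f}_{L^2}$, and summation of the geometric series over $j<-M$ gives $I\leq C2^{-M/2}\norm{f}_{L^2}\leq C2^{-M/2}\norm{f}_{H^s}$.

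It then remains to optimize. Choosing $N$ to be the smallest nonnegative integer with $2^{N(s-1/2)}\geq 1+\norm{f}_{H^s}$ and $M$ the smallest nonnegative integer with $2^{M/2}\geq 1+\norm{f}_{H^s}$ forces $I+III\leq C$, while both $M$ and $N$ are then of size $O_s(1+\log(1+\norm{f}_{H^s}))$. Substituting these choices into the bound for $II$ produces the claimed estimate
\begin{equation*}
\norm{f}_{L^\infty}\leq C\big(1+\norm{f}_{\text{BMO}}\big[1+\log(1+\norm{f}_{H^s})\big]\big).
\end{equation*}

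I expect the delicate point to be the treatment of the low-frequency contribution. The naive estimate $\norm{S_0 f}_{L^\infty}\lesssim\norm{f}_{L^2}$ (Bernstein on a single inhomogeneous low-pass block) is far too lossy: it would place $\norm{f}_{H^s}$ \emph{outside} the logarithm and destroy the inequality. The resolution, and the reason for working with homogeneous blocks, is that each low block is controlled by $\norm{f}_{\text{BMO}}$ through the $\dot B^0_{\infty,\infty}$ embedding, so that only the finitely many low blocks down to the logarithmically chosen scale $-M$ contribute the $\log$ factor, while the remaining tail is summable and harmless under the crude $L^2$ bound. One should also check that the dyadic sum converges to $f$ in a pointwise almost-everywhere sense, which is justified since $f\in H^s\subset L^2$ and the partial sums are uniformly bounded in $L^\infty$ by the estimates above.
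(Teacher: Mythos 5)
Your proof is correct, and there is nothing in the paper to compare it against: the lemma is stated without proof, imported from Dong \cite{Dong-2008-JFA}, and your Littlewood--Paley argument is essentially the standard Kozono--Taniuchi-type proof used in that literature. All three frequency regimes are handled properly: the convergence of $\sum_{j>N}2^{j(1-2s)}$ is exactly where $s>\tfrac12$ enters (and makes the constant $C=C(s)$ through the factor $(s-\tfrac12)^{-1}$ in your choice of $N$), the uniform block bound $\norm{\dot\Delta_j f}_{L^\infty}\leq C\norm{f}_{\text{BMO}}$ is the correct substitute for the lossy low-pass estimate, and your closing remark about the low frequencies identifies the genuine subtlety: an inhomogeneous decomposition with $\norm{S_0f}_{L^\infty}\lesssim\norm{f}_{L^2}$ would leave $\norm{f}_{H^s}$ outside the logarithm, while pushing the $\text{BMO}$ bound down to the logarithmically chosen scale $-M$ and absorbing the remaining tail by the geometrically small $L^2$ estimate yields precisely the stated form. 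Two cosmetic points: the first displayed relation for $III$ should read ``$\leq$'' rather than ``$=$'' (it already uses Bernstein), and the a.e.\ identification $f=\sum_j\dot\Delta_j f$ for $f\in L^2$, which you flag, is indeed all that is needed to pass from the blockwise bounds to $\norm{f}_{L^\infty}$.
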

	
\subsubsection*{\textsf{The Helmholtz operator}}

We denote by $\mathscr{Q}$ the operator $(1-\partial_{x}^{2})^{-1}$ which acting on functions $f\in L^{2}(\RR)$ has the representation 
\begin{equation} \label{repre:Q}
\mathscr{Q}f(x)=[G\star f](x)=\int_{\RR} G(x-\eta) f(\eta) \  d\eta ,  \quad G(x)=\frac{1}{2}e^{-|x|}, \quad x\in\RR.
\end{equation}
Furthermore, the Fourier symbol of $\mathscr{Q}$ is
$$\widehat{\mathscr{Q}f}(\xi)=\frac{1}{1+|\xi|^{2}}\widehat{f}(\xi),$$
and by a simple computation we have that $\mathscr{Q}f\in H^{2}$ if $f\in L^{2}$ and
\begin{equation}\label{comp:helmholtz}
\pa_{x}^{2}\mathscr{Q}f(x)=\left( \mathscr{Q}-\text{I}\right)f(x),\; x\in\mathbb{R},
\end{equation}
where $\text{I}$ denotes the identity operator.

\subsubsection*{\textsf{Constants}}

Throughout the paper $C = C(\cdot)$ will denote a positive constant that may depend on fixed parameters  and  $x \lesssim y$ ($x \gtrsim y$) means that $x\le C y$ ($x\ge C y$) holds for some $C$.


\section{Derivation of the asymptotic models}\label{sec:deri}
In this section, we derive the three asymptotic models (\ref{eq:Boussinesq}), (\ref{wave}), and (\ref{wave:uni}) of system \eqref{eq:1} by means of a multi-scale expansion (cf.  \cite{CL14,CL15,CLS12,CGSW18,granero2018asymptotic,GS}). 
\subsection{The non-local Boussinesq model}
Using (\ref{adg1}), the
system \eqref{eq:1} can be equivalently written as
\begin{subequations}\label{eq:2}
\begin{align}
N_t+(UN)_x+U_x&=0,\\
U_t+UU_x+\frac{(1+B)B_x}{1+N}&=0,\\
B-N-\left(\frac{B_x}{1+N}\right)_x&=0.
\end{align}
\end{subequations}
In this new variables, the initial data \eqref{initialdata} takes the form
\begin{equation}\label{initialdata2}
N(x,0)=n_{0}(x)-1,\;U(x,0)=u_{0}(x).
\end{equation}
Furthermore, we can rewrite (\ref{eq:2}b) as
$$
U_t+UU_x+B_x+BB_x+U_tN+NUU_x=0.
$$
Similarly, (\ref{eq:2}c) can be expanded
$$
B-N-\frac{B_{xx}}{1+N}+\frac{B_x}{(1+N)^2}N_x=0,
$$
and then it takes the similar form
$$
B-N+BN^2-N^3+2NB-2N^2-B_{xx}-NB_{xx}+B_xN_x=0.
$$
Then, \eqref{eq:2} becomes
\begin{subequations}\label{eq:3}
\begin{align}
N_t+U_x&=-(NU)_x,\\
U_t+B_x&=-UU_x-BB_x-U_tN-NUU_x,\\
B-N-B_{xx}&=-BN^2+N^3-2NB+2N^2+NB_{xx}-B_xN_x.
\end{align}
\end{subequations}
Now, from the ansatz (\ref{adg2})
and equating in powers of $\epsilon$, the system \eqref{eq:3} leads to a cascade of linear equations for the coefficients $N^{(\ell)},  B^{(\ell)}$, and $U^{(\ell)}$. The first terms satisfy
\begin{subequations}\label{eq:case0}
\begin{align}
N^{(0)}_t+U^{(0)}_x&=0,\\
U^{(0)}_t+B^{(0)}_x&=0,\\
B^{(0)}-N^{(0)}-B^{(0)}_{xx}&=0,
\end{align}
\end{subequations}
with initial data from (\ref{initialdata2}). System \eqref{eq:case0} can be explicitly decoupled and we find
\begin{subequations}\label{eq:case0b}
\begin{align}
N^{(0)}_t+U^{(0)}_x&=0,\\
U^{(0)}_t+\partial_x(1-\partial_x^2)^{-1}N^{(0)}&=0,\\
B^{(0)}&=(1-\partial_x^2)^{-1}N^{(0)}.
\end{align}
\end{subequations}
Then, (\ref{eq:case0}a), (\ref{eq:case0}b) lead to
\begin{equation}\label{adg5}
N^{(0)}_{tt}+\mathscr{L}N^{(0)}=0,\;
U^{(0)}_{tt}+\mathscr{L}U^{(0)}=0.
\end{equation}
The second term in the expansion solves
\begin{subequations}\label{eq:case1}
\begin{align}
N^{(1)}_t+U^{(1)}_x&=-(N^{(0)}U^{(0)})_x,\\
U^{(1)}_t+B^{(1)}_x&=-U^{(0)}U^{(0)}_x-B^{(0)}B^{(0)}_x-U^{(0)}_tN^{(0)},\\
B^{(1)}-N^{(1)}-B^{(1)}_{xx}&=-2N^{(0)}B^{(0)}+2(N^{(0)})^2+N^{(0)}B_{xx}^{(0)}-B_x^{(0)}N_x^{(0)}.
\end{align}
\end{subequations}
Note that, using (\ref{eq:case0}c), the equation (\ref{eq:case1}c) can be written as
\begin{align*}
B^{(1)}-N^{(1)}-B^{(1)}_{xx}&=-2N^{(0)}(B^{(0)}-N^{(0)}-B_{xx}^{(0)})-N^{(0)}B_{xx}^{(0)}-B_x^{(0)}N_x^{(0)}\\
&=-(N^{(0)}B_{x}^{(0)})_x.
\end{align*}
Then, from (\ref{eq:case0b}c), we have
\begin{equation}\label{adg6}
B^{(1)}=(1-\partial_x^2)^{-1}N^{(1)}-\mathscr{N}(N^{(0)}B_{x}^{(0)})=(1-\partial_x^2)^{-1}N^{(1)}-\mathscr{N}(N^{(0)}\mathscr{N}N^{(0)}).
\end{equation}
Now, using (\ref{eq:case0b}b-c), equation (\ref{eq:case1}b) can be written as
\[
U^{(1)}_t+B^{(1)}_x=-U^{(0)}U^{(0)}_x-B^{(0)}(\mathscr{N}N^{(0)})+(\mathscr{N}N^{(0)})N^{(0)}.
\]
Furthermore, from (\ref{eq:case0b}b), note that the last two terms can be written as
\[-B^{(0)}(\mathscr{N}N^{(0)})+(\mathscr{N}N^{(0)})N^{(0)}=-\mathscr{N}N^{(0)}B^{(0)}_{xx},  \]
and using (\ref{eq:case0b}c) again we obtain
\begin{equation}\label{adg6b}
U^{(1)}_t+B^{(1)}_x=-U^{(0)}U^{(0)}_x+\mathscr{N}N^{(0)}\mathscr{L}N^{(0)}.
\end{equation}

Substitution of $B^{(1)}$ from (\ref{adg6}) into (\ref{adg6b}) leads to
\begin{align}
U^{(1)}_t+\mathscr{N}N^{(1)}&=-\mathscr{L}(N^{(0)}\mathscr{N}N^{(0)})-U^{(0)}U^{(0)}_x+\mathscr{N}N^{(0)}\mathscr{L}N^{(0)}\nonumber \\
&=-U^{(0)}U^{(0)}_x-\left[\mathscr{L},\mathscr{N}N^{(0)}\right]N^{(0)}, \label{eq:U1t}
\end{align}
where the commutator is given by (\ref{adg4b}).  The approximate model \eqref{eq:Boussinesq} for the truncations $h=\varepsilon N^{(0)}+\varepsilon^2 N^{(1)},\;v=\varepsilon U^{(0)}+\varepsilon^2 U^{(1)},$ is derived from (\ref{eq:U1t}) after neglecting $\mathcal{O}(\varepsilon^3)$ terms.

\subsection{The non-local single wave equation model}\label{sec:nonlocalwave}
From (\ref{adg5}) we observe that $N^{(0)}$ and $U^{(0)}$ satisfy the same linear nonlocal wave equation. Thus, under the extra assumption of having the same initial data, we can conclude that
\begin{equation}\label{samedata}
U^{(0)}=N^{(0)}.
\end{equation}
This will allow to further simplify the system \eqref{eq:Boussinesq}. Taking the time derivative of  (\ref{eq:case1}a) and using \eqref{eq:U1t} we find that
\begin{align*}
N^{(1)}_{tt}-\mathscr{N}N^{(1)}_{x}&=(U^{(0)}U^{(0)}_{x})_{x}-\left[\mathscr{L},\mathscr{N}N^{(0)}\right]N^{(0)}_{x}-(N^{(0)}U^{(0)})_{xt}.
\end{align*}
Furthermore, from \eqref{samedata} and \eqref{adg3}, we conclude that
\begin{align*}
N^{(1)}_{tt}+\mathscr{L}N^{(1)}&=\left(N^{(0)}N^{(0)}_x+\left[\mathscr{L},\mathscr{N}N^{(0)}\right]N^{(0)}\right)_x-2\left(N^{(0)}N^{(0)}_t\right)_x.
\end{align*}
Considering now the truncation $h=\varepsilon N^{(0)}+\varepsilon^2 N^{(1)},$ and neglecting contributions of order $\mathcal{O}(\varepsilon^3)$ in the last expression, the single wave equation \eqref{wave} emerges.

\subsection{The unidirectional non local wave model}
In this section, we derive  the unidirectional asymptotic model  \eqref{wave:uni}. We introduce the following far field variables
\begin{equation}\label{far:var}
\chi=x-t, \quad \tau=\varepsilon t.
\end{equation}
Using the chain rule we have that
\begin{equation*}
\frac{\partial^{2}}{\partial t^{2}}h(\chi(x,t),\tau(t))=h_{\chi\chi}-\varepsilon h_{\chi\tau}-\varepsilon h_{\tau\chi}+\varepsilon^{2}h_{\tau\tau}.
\end{equation*}
On the other hand, using the representation \eqref{repre:Q} of the Helmholtz operator $\mathscr{Q}$, it is not hard to see that
$\mathscr{Q}=(1-\partial_{\chi \chi})^{-1}.$
Therefore, from the change of variables \eqref{far:var} and neglecting terms of order $\mathcal{O}(\varepsilon^3)$ (notice that by construction $h\sim \mathcal{O}(\varepsilon)$), we find that equation \eqref{wave} becomes
\begin{equation*}
\left(h_{\chi}-2\varepsilon h_{\tau}\right)_{\chi}+(\mathscr{N}h)_{\chi}=\left( 3 h h_{\chi}-[\mathscr{L},\mathscr{N}h]h\right)_{\chi}
\end{equation*}
which after integrating in $\chi$, reordering terms and going back by abuse of notation to  variables $x$ and $t$ we obtain \eqref{wave:uni}.
\section{Conserved quantities}\label{sec:quantities}
In this section we derive some conserved quantities of the models above. We start with the system (\ref{eq:Boussinesq}). Note first that we can write
\begin{equation}\label{eq:brac}
\left[\mathscr{L},\mathscr{N}h\right]h=\mathscr{L}\left(h\mathscr{N}h\right)+\frac{1}{2}\partial_{x}\left(\mathscr{N}h\right)^{2}.
\end{equation}
Property (\ref{eq:brac}) leads to the formulation of (\ref{eq:Boussinesq})
in conservation form
\begin{equation*}
\partial_{t}\begin{pmatrix}h\\ v\end{pmatrix}+\partial_{x}f(h,v)=0,
\end{equation*}
where 
\begin{equation*}
f(h,v)=\begin{pmatrix} v(1+h)\\\frac{v^{2}}{2}-\mathscr{N}\left(h\mathscr{N}h\right)+\frac{1}{2}\left(\mathscr{N}h\right)^{2}+\mathscr{Q}h\end{pmatrix}.
\end{equation*}
Then, if $u=\mathscr{Q}h$ and we assume that $h,v,u,u_{x}\rightarrow 0$ as $|x|\rightarrow\infty$, we obtain the preservation of
$$I_{1}(h,v)=\int_{\RR}hdx,\quad I_{2}(h,v)=\int_{\RR}vdx.$$
On the other hand, the following lemma is used below to derive a third conserved quantity.
\begin{lemma}
\label{lemm1}
If $h\rightarrow 0$ as $|x|\rightarrow\infty$, then:
\begin{subequations}\label{eq:l1}
\begin{align}
&\int_{\RR}h\mathscr{L}\left(h\mathscr{N}h\right)dx=-\frac{1}{2}\int_{\RR}h\partial_{x}\left(\mathscr{N}h\right)^{2}dx ,\nonumber\\
&\int_{\RR}h\mathscr{N}hdx=0.
\end{align}
\end{subequations}
\end{lemma}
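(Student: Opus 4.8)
The plan is to prove both identities purely from the Fourier-multiplier structure of $\mathscr{L}$ and $\mathscr{N}$, exploiting the symmetry of their symbols together with the algebraic relation $\partial_x\mathscr{N}=-\mathscr{L}$ recorded in \eqref{adg3}. Throughout, the decay hypothesis $h\to0$ as $|x|\to\infty$ (together with enough Sobolev regularity to make the products integrable and $\mathscr{N}h$ well defined) guarantees that all integrals converge, that any boundary terms vanish, and that the Plancherel identity applies.

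I would treat the second identity first, since it is immediate. The symbol of $\mathscr{N}$ is $\widehat{\mathscr{N}h}(\xi)=\frac{i\xi}{1+\xi^2}\widehat{h}(\xi)$, which is odd and purely imaginary; hence $\mathscr{N}$ is skew-adjoint on $L^{2}(\RR)$, i.e. $\mathscr{N}^{*}=-\mathscr{N}$. Testing against $h$ gives $\int_{\RR}h\mathscr{N}h\,dx=\langle h,\mathscr{N}h\rangle=-\langle\mathscr{N}h,h\rangle=-\int_{\RR}h\mathscr{N}h\,dx$, so the integral must vanish. Equivalently, by Plancherel the integral equals $\frac{1}{2\pi}\int_{\RR}\frac{-i\xi}{1+\xi^{2}}|\widehat{h}(\xi)|^{2}\,d\xi$, whose integrand is odd in $\xi$ (since $|\widehat{h}(\xi)|^{2}$ is even for real $h$) and therefore integrates to zero.

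For the first identity I would use that $\mathscr{L}$ has the real, even symbol $\frac{\xi^{2}}{1+\xi^{2}}$ and is therefore self-adjoint on $L^{2}(\RR)$. Moving $\mathscr{L}$ off the product and onto the leading factor gives $\int_{\RR}h\,\mathscr{L}(h\mathscr{N}h)\,dx=\int_{\RR}(\mathscr{L}h)\,h\,\mathscr{N}h\,dx$. Next I would invoke the relation $\mathscr{L}=-\partial_x\mathscr{N}$, so that $\mathscr{L}h=-(\mathscr{N}h)_{x}$, together with the elementary product rule $(\mathscr{N}h)(\mathscr{N}h)_{x}=\tfrac12\partial_{x}(\mathscr{N}h)^{2}$. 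Substituting yields
\begin{equation*}
\int_{\RR}(\mathscr{L}h)\,h\,\mathscr{N}h\,dx=-\int_{\RR}h\,(\mathscr{N}h)(\mathscr{N}h)_{x}\,dx=-\frac12\int_{\RR}h\,\partial_{x}(\mathscr{N}h)^{2}\,dx,
\end{equation*}
which is exactly the claimed right-hand side; note that no integration by parts producing boundary terms is needed at this final step, only the self-adjointness used at the outset.

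I do not expect a genuine obstacle here: both identities are structural consequences of the (anti)symmetry of the multipliers and the factorization $\mathscr{L}=-\partial_x\mathscr{N}=\partial_x^{2}\mathscr{Q}$. The only care required is the routine functional-analytic bookkeeping, namely verifying, for $h$ in the relevant Sobolev class, that $\mathscr{N}$ gains a derivative so that $h\mathscr{N}h$ is integrable, that $\mathscr{L}$ is bounded on $L^{2}$, and that the decay of $h$ legitimizes the self-adjointness and Plancherel steps, so that each formal manipulation above is rigorously justified.
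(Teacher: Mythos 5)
Your proof is correct and takes essentially the same route as the paper's: self-adjointness of $\mathscr{L}$ to move it onto $h$, the relation $\mathscr{L}h=-\partial_x\mathscr{N}h$ combined with the product rule for the first identity, and skew-adjointness of $\mathscr{N}$ (justified via Plancherel and the odd, purely imaginary symbol) for the second. The only blemish is the incidental parenthetical $\mathscr{L}=\partial_x^{2}\mathscr{Q}$, which should read $\mathscr{L}=-\partial_x^{2}\mathscr{Q}$, but since that factorization is never used in your argument it does not affect correctness.
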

\begin{proof}
We use the Fourier symbols of the operators $\mathscr{L}, \mathscr{N}$, the relation $\partial_{x}\mathscr{N}=-\mathscr{L}$, and Plancherel identity to have
the following identities:
\begin{eqnarray*}
\int_{\RR}h\mathscr{L}\left(h\mathscr{N}h\right)dx&=&\int_{\RR}h\mathscr{N}h\mathscr{L}h dx=-\int_{\RR}h\mathscr{N}h\partial_{x}\mathscr{N}h dx=-\frac{1}{2}\int_{\RR}h\partial_{x}\left(\mathscr{N}h\right)^{2}dx,\\
\int_{\RR}h\mathscr{N}hdx&=&-\int_{\RR}\left(\mathscr{N}h\right)hdx.
\end{eqnarray*}
\end{proof}
\begin{prop}
Let $h, v$ be solutions of (\ref{eq:Boussinesq}) with $h,v\rightarrow 0$ as $|x|\rightarrow\infty$ and let
\begin{equation}\label{eq:conserved1}
I(h,v)=\int_{\RR}hvdx=\int_{\RR} (uv+u_{x}v_{x})dx,
\end{equation}
where $u=\mathscr{Q}h$. Then
$$\frac{d}{dt} I(h,v)=0.$$
\end{prop}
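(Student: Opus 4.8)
The plan is to differentiate the conserved quantity $I(h,v)=\int_\RR hv\,dx$ in time, substitute the evolution equations (\ref{eq:Boussinesq}a)--(\ref{eq:Boussinesq}b), and show that all resulting integral terms either cancel in pairs or vanish by the structural identities of Lemma \ref{lemm1} together with integration by parts. Concretely, I would compute
\begin{equation*}
\frac{d}{dt}\int_\RR hv\,dx=\int_\RR h_t v\,dx+\int_\RR h v_t\,dx,
\end{equation*}
and insert $h_t=-(hv)_x-v_x$ from (\ref{eq:Boussinesq}a) and $v_t=-vv_x-[\mathscr{L},\mathscr{N}h]h-\mathscr{N}h$ from (\ref{eq:Boussinesq}b). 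The decay hypotheses $h,v\to0$ as $|x|\to\infty$ justify discarding all boundary terms produced by integration by parts.

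First I would handle the purely local (non-operator) terms. The contribution $\int_\RR\big(-(hv)_x-v_x\big)v\,dx$ combined with $\int_\RR h(-vv_x)\,dx$ should collapse after integrating by parts: writing $-(hv)_x v = -h_x v^2 - h v v_x$ and noting $\int h v v_x\,dx = \tfrac12\int h (v^2)_x\,dx = -\tfrac12\int h_x v^2\,dx$, one finds the cubic terms reorganize and cancel. The term $-\int_\RR v_x v\,dx=-\tfrac12\int_\RR (v^2)_x\,dx=0$ vanishes outright by the decay assumption. Thus the entire local part of $\frac{d}{dt}I$ is zero.

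The nonlocal part is $\int_\RR h\big(-[\mathscr{L},\mathscr{N}h]h-\mathscr{N}h\big)\,dx$, and here is where Lemma \ref{lemm1} does the work. Using the identity (\ref{eq:brac}), namely $[\mathscr{L},\mathscr{N}h]h=\mathscr{L}(h\mathscr{N}h)+\tfrac12\partial_x(\mathscr{N}h)^2$, I would split this into $-\int_\RR h\mathscr{L}(h\mathscr{N}h)\,dx-\tfrac12\int_\RR h\,\partial_x(\mathscr{N}h)^2\,dx-\int_\RR h\mathscr{N}h\,dx$. The first identity in Lemma \ref{lemm1} states exactly that $\int_\RR h\mathscr{L}(h\mathscr{N}h)\,dx=-\tfrac12\int_\RR h\,\partial_x(\mathscr{N}h)^2\,dx$, so the first two integrals cancel; the second identity gives $\int_\RR h\mathscr{N}h\,dx=0$, killing the last. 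Hence the nonlocal part also vanishes.

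The main obstacle I anticipate is the bookkeeping in the local cubic terms: one must be careful to pair $h_t v$ against $h v_t$ correctly and ensure every integration by parts is legitimate under the stated decay, rather than any genuinely hard estimate. The second equality in (\ref{eq:conserved1}), $\int_\RR hv\,dx=\int_\RR(uv+u_x v_x)\,dx$ with $u=\mathscr{Q}h$, is just the self-adjointness of $\mathscr{Q}$ and the relation (\ref{comp:helmholtz}): since $h=(1-\partial_x^2)u=u-u_{xx}$, integrating by parts gives $\int hv=\int(u-u_{xx})v=\int(uv+u_x v_x)$, so this reformulation requires no separate argument. Assembling the vanishing local and nonlocal contributions yields $\frac{d}{dt}I(h,v)=0$.
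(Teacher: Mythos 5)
Your proposal is correct and takes essentially the same route as the paper's proof: both differentiate $I(h,v)$, substitute the evolution equations (equivalently, pair the first equation with $v$ and the second with $h$), eliminate the nonlocal contributions via the splitting (\ref{eq:brac}) together with the two identities of Lemma \ref{lemm1}, and cancel the remaining local cubic and quadratic terms by integration by parts under the stated decay hypotheses. Your observation that the second equality in (\ref{eq:conserved1}) follows from $h=u-u_{xx}$ and one integration by parts matches the paper's (implicit) justification as well.
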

\begin{proof}
Using (\ref{eq:brac}), we write (\ref{eq:Boussinesq}) in the form
\begin{subequations}\label{eq:Boussinesq1}
\begin{align}
h_t+(hv)_x+v_x&=0,\\
v_t+vv_x+\mathscr{L}\left(h\mathscr{N}h\right)+\frac{1}{2}\partial_{x}\left(\mathscr{N}h\right)^{2}+\mathscr{N}h&=0.
\end{align}
\end{subequations}
Multiplying (\ref{eq:Boussinesq1}a) by $v$,  (\ref{eq:Boussinesq1}b) by $h$, adding these two amounts, using Lemma \ref{lemm1}, and the hypotheses on $h$ and $v$, we have
$$
\int_{\RR}\left(h_{t}v+v_{t}h+v((1+h)v)_{x}+vv_{x}h\right)dx=0.
$$ Finally, using that $h,v\rightarrow 0$ as $|x|\rightarrow\infty$ again, note that
\begin{subequations}
\begin{align}
\int_{\RR}\left(v((1+h)v)_{x}+vv_{x}h\right)dx&= \int_{\RR}\left(vv_{x}+v(hv)_{x}+vv_{x}h\right)dx,\nonumber\\
&=\int_{\RR}\left(\frac{\partial}{\partial_{x}}\left(\frac{v^{2}}{2}\right)-v_{x}hv+v_{x}hv\right)dx=0.\nonumber
\end{align}
\end{subequations}
\end{proof}
A final result on (\ref{eq:Boussinesq}) is the Hamiltonian formulation. The proof is direct.
\begin{theorem}
The system (\ref{eq:Boussinesq}) admits a Hamiltonian structure
\begin{equation*}
\partial_{t}\begin{pmatrix}h\\ v\end{pmatrix}=\mathscr{J}\delta E(h,v),
\end{equation*}
where the solution pair $(h,v)$ is smooth enough and vanishes at infinity,
$$\delta E=\left(\frac{\delta E}{\delta h},\frac{\delta E}{\delta v}\right)^{T},$$ denotes the variational derivative,
$$\mathscr{J}=-\partial_{x}\begin{pmatrix}0&1\\ 1&0\end{pmatrix},$$
and
\begin{equation*}
E(h,v)=\frac{1}{2}\int_{\RR}\left(v^{2}(1+h)+(\mathscr{B}h)^{2}+h(\mathscr{N}h)^{2}\right)dx,\quad \mathscr{Q}=\mathscr{B}^{2}.
\end{equation*}
\end{theorem}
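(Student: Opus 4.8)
The plan is to verify the Hamiltonian identity componentwise, by computing the two variational derivatives $\frac{\delta E}{\delta h}$ and $\frac{\delta E}{\delta v}$ directly and checking that $\mathscr{J}\delta E$ reproduces the right-hand sides of the system written in the conservation form \eqref{eq:Boussinesq1}. Since $\mathscr{J}=-\partial_{x}\begin{pmatrix}0&1\\1&0\end{pmatrix}$, the claimed evolution amounts to the two scalar identities $h_{t}=-\partial_{x}\frac{\delta E}{\delta v}$ and $v_{t}=-\partial_{x}\frac{\delta E}{\delta h}$, so no closure or energy estimate is involved; the statement follows as soon as these are confirmed.

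First I would dispatch the easy component. The only $v$-dependence in $E$ sits in $\frac{1}{2}\int_{\RR}v^{2}(1+h)\,dx$, so that $\frac{\delta E}{\delta v}=v(1+h)$, and applying $-\partial_{x}$ yields exactly $-(hv)_{x}-v_{x}$, matching (\ref{eq:Boussinesq1}a). Next I would compute $\frac{\delta E}{\delta h}$ term by term. The same term $\frac{1}{2}\int_{\RR}v^{2}(1+h)\,dx$ contributes $\frac{1}{2}v^{2}$. For $\frac{1}{2}\int_{\RR}(\mathscr{B}h)^{2}\,dx$ I would use that $\mathscr{B}$, being a Fourier multiplier with the real even symbol $(1+\xi^{2})^{-1/2}$, is self-adjoint, so this functional equals $\frac{1}{2}\int_{\RR}h\,\mathscr{Q}h\,dx$ and contributes $\mathscr{Q}h$. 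For the cubic term $\frac{1}{2}\int_{\RR}h(\mathscr{N}h)^{2}\,dx$ I would perturb $h\mapsto h+\varepsilon\phi$ and collect the first variation $\frac{1}{2}\int_{\RR}\phi(\mathscr{N}h)^{2}\,dx+\int_{\RR}(h\mathscr{N}h)\,\mathscr{N}\phi\,dx$; since $\mathscr{N}$ has the purely imaginary odd symbol $\frac{i\xi}{1+\xi^{2}}$ from \eqref{adg4} and is therefore \emph{skew}-adjoint, the second integral becomes $-\int_{\RR}\phi\,\mathscr{N}(h\mathscr{N}h)\,dx$. Collecting the three contributions gives $\frac{\delta E}{\delta h}=\frac{1}{2}v^{2}+\mathscr{Q}h+\frac{1}{2}(\mathscr{N}h)^{2}-\mathscr{N}(h\mathscr{N}h)$.

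Finally I would apply $-\partial_{x}$ and invoke the operator identities $\partial_{x}\mathscr{Q}=\mathscr{N}$ and $\partial_{x}\mathscr{N}=-\mathscr{L}$ from \eqref{adg3}, which turn $-\partial_{x}\frac{\delta E}{\delta h}$ into $-vv_{x}-\mathscr{N}h-\frac{1}{2}\partial_{x}(\mathscr{N}h)^{2}-\mathscr{L}(h\mathscr{N}h)$, precisely the right-hand side of (\ref{eq:Boussinesq1}b). The only step requiring care is the variation of the cubic term: one must track the \emph{skew}-adjointness of $\mathscr{N}$ (rather than self-adjointness) to obtain the correct sign, which is exactly what produces the commutator structure recorded in \eqref{eq:brac}; every remaining manipulation is a mechanical check.
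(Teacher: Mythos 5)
Your proposal is correct: the variational derivatives $\frac{\delta E}{\delta v}=v(1+h)$ and $\frac{\delta E}{\delta h}=\frac{1}{2}v^{2}+\mathscr{Q}h+\frac{1}{2}(\mathscr{N}h)^{2}-\mathscr{N}(h\mathscr{N}h)$ are computed correctly (including the sign from the skew-adjointness of $\mathscr{N}$), and applying $-\partial_{x}$ with $\partial_{x}\mathscr{Q}=\mathscr{N}$ and $\partial_{x}\mathscr{N}=-\mathscr{L}$ recovers exactly the conservation form \eqref{eq:Boussinesq1}, which is equivalent to \eqref{eq:Boussinesq} via \eqref{eq:brac}. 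The paper omits the argument entirely (``The proof is direct''), and your computation is precisely the direct verification intended, so the approaches coincide.
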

On the other hand, using (\ref{eq:brac}), the bidirectional model \eqref{wave} can be written in a conservation form
\begin{equation*}
\partial_{t}(h_{t}+\partial_{x}h^{2})-\partial_{x}\left(\mathscr{N}h+\partial_{x}\left(\frac{h^{2}}{2}\right)-\partial_{x}\mathscr{N}(h\mathscr{N}h)+\partial_{x}\left(\frac{(\mathscr{N}h)^{2}}{2}\right)\right)=0,
\end{equation*} 
which, assuming that $h$ is sufficiently smooth and that $h_{x}$ vanishes at infinity, implies that
$$\frac{d}{dt}\int_{\mathbb{R}}(h_{t}+\partial_{x}h^{2})dx=0.$$ 

As far as the unidirectional model  \eqref{wave:uni} is concerned, using again (\ref{eq:brac}), the model is written in conservation form
\begin{equation}\label{adg8}
h_{t}+\partial_{x}\left(\frac{3}{4}h^{2}+\frac{1}{2}\mathscr{N}(h\mathscr{N}h)-\frac{1}{4}(\mathscr{N}h)^{2}-\frac{1}{2}\mathscr{Q}h-\frac{h}{2}\right)=0,
\end{equation}
which implies, when $h\rightarrow 0$ as $x\rightarrow\pm\infty$, the preservation in time of
$$\int_{\mathbb{R}}hdx.$$ If, in addition, we multiply (\ref{adg8}) by h, integrate on $\mathbb{R}$ and use Lemma \ref{lemm1}, then the $L^{2}$ norm
$$\int_{\mathbb{R}}h^{2}dx,$$ is the second conserved quantity. Finally, 
the unidirectional model  \eqref{wave:uni}
also admits a Hamiltonian structure
\begin{equation*}
h_{t}=\frac{1}{2\epsilon}\partial_{x}\delta E(h),
\end{equation*}
where now $\delta E=\frac{\delta E}{\delta h}$ and
\begin{equation}\label{adg9}
E(h)=\frac{1}{2}\int_{\RR}\left(h^{2}-h^{3}+(\mathscr{B}h)^{2}+h(\mathscr{N}h)^{2}\right)dx,
\end{equation}
and where the phase space for (\ref{adg9}) involves smooth enough functions $h$ vanishing at infinity.
\section{Well-posedness for the non-local Boussinesq system}\label{sec:nonlocalBouss}
In this section we will show the well-posedness of system \eqref{eq:Boussinesq}. Due to the coupled nature of the equations when performing the a priori energy estimates we need to \textit{symmetrize} the system. To this end, let us introduce the following functional space 
\begin{equation}\label{adg12}
\mathcal{X}=\{(h,v)\in H^{2}(\RR)\times H^{3}(\RR): \norm{(h,v)}_{\mathcal{X}}=\norm{(h,v)}_{L^2(\RR)\times L^{2}(\RR)}^{2}+\norm{\sqrt{\mathscr{L}}\pa_{x}^{2}h}_{L^2(\RR)}^{2} +  \norm{\pa_{x}^{3}v}_{L^2(\RR)}^{2}<\infty \}.
\end{equation}
If $m(\xi)=\frac{\xi^2}{1+\xi^2}$ denotes  the Fourier multiplier of the operator $\mathscr{L}$ (cf. (\ref{adg4})) then in (\ref{adg12}) $\mathscr{T}=\sqrt{\mathscr{L}}$ denotes the operator with Fourier symbol $\sqrt{m(\xi)}$, and therefore it formally satisfies $\mathscr{T}^{2}={\mathscr{L}}$. Then the norm introduced in the definition of $\mathcal{X}$ in (\ref{adg12}) is related to a classical seminorm in ${H}^{k}(\RR)$ as follows.
%
%
%

\begin{lemma}\label{lema:LtoL2}
Let  $k\in \mathbb{N}$ and $f\in L^{2}(\RR)$ be smooth enough. Then, there exists a constant $C>0$ such that 
\begin{equation}\label{estimate:lema:LtoL2}
\norm{\pa_{x}^{k}f}_{L^{2}} \leq C\left(\norm{f}_{L^{2}}+\norm{\sqrt{\mathscr{L}}\pa_{x}^{k}f}_{L^{2}}\right).
\end{equation}
\end{lemma}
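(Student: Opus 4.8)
The plan is to pass to the Fourier side and reduce the stated inequality to an elementary pointwise bound on the symbols. By Plancherel's identity, writing $m(\xi)=\frac{\xi^2}{1+\xi^2}$ for the symbol of $\mathscr{L}$ (so that $\sqrt{m(\xi)}$ is the symbol of $\sqrt{\mathscr{L}}$), one has
\begin{align*}
\norm{\partial_x^k f}_{L^2}^2&=\int_{\RR}|\xi|^{2k}|\widehat{f}(\xi)|^2\,d\xi,\\
\norm{\sqrt{\mathscr{L}}\partial_x^k f}_{L^2}^2&=\int_{\RR}m(\xi)|\xi|^{2k}|\widehat{f}(\xi)|^2\,d\xi,\\
\norm{f}_{L^2}^2&=\int_{\RR}|\widehat{f}(\xi)|^2\,d\xi.
\end{align*}
Hence it suffices to establish the pointwise inequality
\begin{equation*}
|\xi|^{2k}\leq C\left(1+m(\xi)|\xi|^{2k}\right),\qquad \xi\in\RR,
\end{equation*}
for a constant $C$ independent of $\xi$; integrating against $|\widehat{f}(\xi)|^2$ then gives $\norm{\partial_x^k f}_{L^2}^2\leq C(\norm{f}_{L^2}^2+\norm{\sqrt{\mathscr{L}}\partial_x^k f}_{L^2}^2)$, and taking square roots together with $\sqrt{a^2+b^2}\leq a+b$ for $a,b\geq0$ yields \eqref{estimate:lema:LtoL2}.

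To prove the symbol inequality I would split the frequency axis into low and high frequencies. For $|\xi|\leq1$ the multiplier $|\xi|^{2k}$ is bounded by $1$, so the constant term on the right-hand side controls the left-hand side. For $|\xi|>1$ one has $m(\xi)=\frac{\xi^2}{1+\xi^2}\geq\frac12$, whence $|\xi|^{2k}\leq 2m(\xi)|\xi|^{2k}$ and the degenerate term absorbs the left-hand side. Combining the two regimes gives the bound with $C=2$, hence $C=\sqrt2$ in the final estimate.

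The point to keep an eye on — though it is not really an obstacle — is the degeneracy of $\sqrt{\mathscr{L}}$ near $\xi=0$: its symbol $\sqrt{m(\xi)}=|\xi|/\sqrt{1+\xi^2}$ vanishes at the origin, so $\norm{\sqrt{\mathscr{L}}\partial_x^k f}_{L^2}$ alone cannot control $\norm{\partial_x^k f}_{L^2}$. This is precisely why the $\norm{f}_{L^2}$ term is needed on the right-hand side: it supplies the missing control over the low frequencies, where $m(\xi)\approx\xi^2$ is small, while the high-frequency part is immediate because $m$ is bounded below away from the origin. The smoothness hypothesis on $f$ is used only to guarantee that the integrals above are finite (equivalently, that $\partial_x^k f\in L^2$ whenever the right-hand side is finite), so that the manipulation on the Fourier side is legitimate.
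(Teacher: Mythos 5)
Your proof is correct and follows essentially the same route as the paper's: both pass to the Fourier side via Plancherel and split frequencies at a threshold, using $\norm{f}_{L^{2}}$ for the low frequencies and the lower bound $m(\xi)=\frac{\xi^{2}}{1+\xi^{2}}\geq\frac{1}{2}$ (the paper uses $\frac{R^{2}}{1+R^{2}}$ with a general radius $R$, then sets $R=1$, which is exactly your choice) for the high frequencies. Your closing remark about the degeneracy of $\sqrt{\mathscr{L}}$ at $\xi=0$ correctly identifies why the $\norm{f}_{L^{2}}$ term is indispensable.
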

\begin{proof}
Let $R>0$ and ${B(0,R)}=\{ x\in\mathbb{R}: |x|\leq R\}$. 
Using Parseval identity we have that
\[
\norm{\pa_{x}^{k}f}_{L^{2}}^{2}=\norm{\widehat{\pa_{x}^{k}f}}_{L^2}^{2}=\int_{\RR} \xi^{2k}|\hat{f}(\xi)|^{2}=\int_{B(0,R)} \xi^{2k}|\hat{f}(\xi)|^{2} \  d\xi +\int_{\mathbb{R}\setminus B(0,R)} \xi^{2k}|\hat{f}(\xi)|^{2} \  d\xi .
\]
The first integral can be bounded  by
\begin{equation}\label{esti1}
\int_{B(0,R)} \xi^{2k}|\hat{f}(\xi)|^{2} \  d\xi \leq R^{2k}\int_{\mathbb{R}} |\hat{f}(\xi)|^{2} \  d\xi = R^{2k}\norm{f}_{L^{2}}^{2}.
\end{equation}
On the other hand, note that for $\xi\in \mathbb{R}\setminus B(0,R)$ we have the pointwise bound
\[\frac{\xi^{2}}{1+\xi^{2}}\geq \frac{R^{2}}{1+R^{2}}.\]
Then it holds that
\begin{equation}\label{est2}
\int_{\mathbb{R}\setminus B(0,R)} \xi^{2k}|\hat{f}(\xi)|^{2} \  d\xi \leq  \frac{1+R^{2}}{R^{2}} \int_{\mathbb{R}\setminus B(0,R)}  \frac{\xi^{2+k}}{1+\xi^{2}}\xi^{k}  |\hat{f}(\xi)|^{2} \  d\xi \leq  \frac{1+R^{2}}{R^{2}} \norm{\sqrt{\mathscr{L}}\pa_{x}^{k}f}_{L^{2}}^{2}.
\end{equation}

Therefore, choosing for instance $R=1$, (\ref{esti1}), (\ref{est2}) yield \eqref{estimate:lema:LtoL2}.
\end{proof}

Then we see that $\mathcal{X}$ is a modified version of $H^2\times H^3$.
\begin{theorem}\label{theoremBouss}
For $(h_0,v_0)\in H^2\times H^3$  there exist a time $0<T_{max}$ and a unique solution 
\[ (h,v)\in C((0,T_{max}), H^2\times H^3)\]
of the ivp of \eqref{eq:Boussinesq} with $h(x,0)=h_{0}(x), v(x,0)=v_{0}(x), x\in\mathbb{R}$.
\end{theorem}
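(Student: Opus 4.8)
The plan is to prove Theorem~\ref{theoremBouss} by an energy method organized around the symmetrizing norm of the space $\mathcal{X}$ from (\ref{adg12}); by Lemma~\ref{lema:LtoL2} this norm is equivalent to that of $H^2\times H^3$, so it suffices to propagate the quantity $X(t)=\norm{(h,v)}_{\mathcal{X}}^2=\norm{h}_{L^2}^2+\norm{v}_{L^2}^2+\norm{\sqrt{\mathscr{L}}\pa_x^2 h}_{L^2}^2+\norm{\pa_x^3 v}_{L^2}^2$. First I would regularize (\ref{eq:Boussinesq}), e.g. by a Friedrichs mollifier $\mathcal{J}_\epsilon$ (or a mild parabolic term $\epsilon\pa_x^2$), obtaining an ODE in a Banach space whose right-hand side is locally Lipschitz; here I use that $\mathscr{L},\mathscr{N},\mathscr{Q}$ are bounded Fourier multipliers, cf. (\ref{adg4}). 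The Picard-Lindel\"of theorem then yields approximate solutions $(h^\epsilon,v^\epsilon)$ on a maximal interval, and the core of the argument is a uniform-in-$\epsilon$ bound $\frac{d}{dt}X\le C(X)\,X$, which by Gronwall produces a time $T_{max}>0$ independent of $\epsilon$ together with uniform bounds on $[0,T_{max})$.

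For the a priori estimate I would first dispose of the $L^2$ part, which is immediate from the boundedness of the operators and from integration by parts in the transport terms. The heart of the matter is the top-order estimate: differentiate the first equation by $\sqrt{\mathscr{L}}\pa_x^2$ and the second by $\pa_x^3$, and test against $\sqrt{\mathscr{L}}\pa_x^2 h$ and $\pa_x^3 v$ respectively. The transport contributions $vh_x$ and $vv_x$ are handled in the usual way: the leading term integrates by parts to produce a harmless factor $\norm{v_x}_{L^\infty}$, while the commutators $[\sqrt{\mathscr{L}}\pa_x^2,v]h_x$ and $[\pa_x^3,v]v_x$ are controlled by the Kato-Ponce inequality of Lemma~\ref{Kato-Ponce commutator estimate}. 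The genuinely coupled linear terms, namely the $v_x$ forcing in the first equation and the $\mathscr{N}h$ term in the second, produce, after redistributing the weight $\sqrt{\mathscr{L}}$, the single contribution $-\int_\RR\pa_x^3 v\,(\mathscr{L}\pa_x^2 h+\mathscr{N}\pa_x^3 h)\,dx$, and a direct computation with the symbols in (\ref{adg4}) gives $\mathscr{L}\pa_x^2 h+\mathscr{N}\pa_x^3 h=0$. This exact cancellation is precisely the ``symmetrization'': it is what forces the weight $\sqrt{\mathscr{L}}$ on $h$ and explains the mismatch of orders between $h$ and $v$. The remaining coupling nonlinearity $hv_x$ in the first equation is benign, since $v_x\in H^2$ forces $hv_x\in H^2$, so it pairs against $\sqrt{\mathscr{L}}\pa_x^2 h\in L^2$ with no loss.

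The step I expect to be the main obstacle is the nonlocal commutator $[\mathscr{L},\mathscr{N}h]h$ in the second equation, because at first sight both summands in the identity (\ref{eq:brac}), namely $\mathscr{L}(h\mathscr{N}h)$ and $\tfrac12\pa_x(\mathscr{N}h)^2$, lie only in $H^2$ when $h\in H^2$, whereas pairing $\pa_x^3([\mathscr{L},\mathscr{N}h]h)$ against $\pa_x^3 v\in L^2$ ostensibly requires $H^3$ regularity. The resolution, which I would make rigorous, is that the $H^2$-level (highest-frequency) parts of the two summands cancel. Writing $a=\mathscr{N}h\in H^3$ and using $\pa_x\mathscr{N}=-\mathscr{L}$ from (\ref{adg3}), one has $\tfrac12\pa_x(a^2)=-a\,\mathscr{L}h$; since $\mathscr{L}$ acts as the identity at high frequency, the top-frequency part of $\mathscr{L}(ha)$ behaves like (low-frequency $a$) times (high-frequency $h$), and this is annihilated by the corresponding part of $-a\,\mathscr{L}h$. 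What survives is either a paraproduct carrying the high frequency on the smoother factor $\mathscr{N}h\in H^3$, a high-high remainder where the smoothing of $\mathscr{N}$ gains a derivative, or a genuine commutator remainder that gains a derivative because $\mathscr{L}$ is a smooth order-zero multiplier. Each of these lands in $H^3$ using only $h\in H^2$, so $\norm{[\mathscr{L},\mathscr{N}h]h}_{H^3}\le C(\norm{h}_{H^2})\norm{h}_{H^2}$, which I would establish by a paraproduct decomposition (or a Coifman-Meyer / Kato-Ponce commutator bound adapted to the symbol $m(\xi)=\xi^2/(1+\xi^2)$). The term is then dominated by $\norm{\pa_x^3 v}_{L^2}\,\norm{[\mathscr{L},\mathscr{N}h]h}_{H^3}\lesssim C(X)\,X$.

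Collecting these bounds yields $\frac{d}{dt}X\le C(X)\,X$, hence uniform bounds and a uniform existence time. To conclude I would pass to the limit $\epsilon\to0$: the uniform $\mathcal{X}$-bounds give weak-$*$ compactness, while an energy estimate for the differences $(h^{\epsilon}-h^{\epsilon'},v^{\epsilon}-v^{\epsilon'})$ in the lower norm $L^2\times H^1$, where the derivative loss in the transport and commutator terms is affordable, shows the family is Cauchy; interpolation with the uniform high-norm bounds upgrades the convergence and identifies the limit as a solution in $C([0,T_{max});H^2\times H^3)$. Uniqueness follows from the same difference estimate applied to two solutions together with Gronwall, and the strong time-continuity (rather than the mere weak continuity inherited from the bounds) is obtained either by a Bona-Smith regularization argument or by combining weak continuity with the continuity of $t\mapsto X(t)$.
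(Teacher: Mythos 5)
Your proposal is correct and follows essentially the same route as the paper's proof: the same symmetrized energy built on $\mathcal{X}$ with mollifier regularization and a Gronwall bound $\mathcal{E}'\lesssim \mathcal{E}^{3/2}$, the same exact cancellation of the linear coupling terms (your symbol identity $\mathscr{L}\pa_x^2 h+\mathscr{N}\pa_x^3 h=0$ is precisely the paper's computation \eqref{rewrittingI2}), and the same key observation that the nonlinearity $\left[\mathscr{L},\mathscr{N}h\right]h$ gains two derivatives because $\mathscr{L}=\mathrm{Id}-\mathscr{Q}$ — which the paper implements by the elementary algebraic rewriting $\left[\mathscr{L},\mathscr{N}h\right]h=-\mathscr{Q}(h\mathscr{N}h)+\mathscr{N}h\,\mathscr{Q}h$ and integration by parts, so your paraproduct decomposition, while valid, is an unnecessary elaboration of the same cancellation. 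The concluding steps (passage to the limit, $L^2$ difference estimate for uniqueness, and the time-reversal argument for strong continuity) also coincide with the paper's.
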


\begin{proof}
We first focus on obtaining some a priori estimates.
We define the energy
\begin{equation}\label{energy:def}
\mathcal{E}(t)=\norm{(h,v)}_{L^{2}\times L^2}^{2} 	+\norm{\sqrt{\mathscr{L}}\pa_{x}^{2}h}_{L^{2}}^{2}+ \norm{\pa_{x}^{3}v}_{L^{2}}^{2},
\end{equation}
where the norm considered in the space $L^{2}(\mathbb{R})\times L^{2}(\mathbb{R})$ is given by
$$\norm{(f,g)}_{L^{2}\times L^2}=\left(||f||_{L^{2}}^{2}+||g||_{L^{2}}^{2}\right)^{1/2}, f,g\in L^{2}(\mathbb{R}).$$
Multiplying the first equation of \eqref{eq:Boussinesq} by $h$, the second by $v$, adding the resulting equalities and integrating on $\mathbb{R}$ we have
\begin{align*}
\frac{1}{2}\frac{d}{dt}\norm{(h,v)}_{L^{2}\times L^2}^{2}=-\int_{\RR} \left((hv)_{x}+v_{x}\right) h \  dx- \int_{\RR} \left(vv_{x}+[\mathscr{L}, \mathscr{N}h]h+\mathscr{N}h \right) v \  dx.
\end{align*}
Now integration by parts, the application of H\"older and Young  inequalities, and the Sobolev embedding $H^{\frac{1}{2}+\epsilon}(\RR)\hookrightarrow L^{\infty}(\RR)$ for $\epsilon>0$ lead to 
\begin{align}\label{L2:estimate:system3}
\frac{1}{2}\frac{d}{dt}\norm{(h,v)}_{L^{2}\times L^2}^{2}\leq C \norm{v_x}_{L^{\infty}}\norm{h}^{2}_{L^{2}} \leq \norm{\pa_{x}^{3}v}^{3}_{L^2}+\norm{h}_{L^{2}}^{3}.
\end{align}
Next, we deal with the other terms in \eqref{energy:def}. Multiplying the first equation in \eqref{eq:Boussinesq} by $ \mathscr{L}\partial_{x}^{4}h$ and integrating we have 
\begin{align}\label{eq:Lh}
\int_{\RR} \mathscr{L}\pa_{x}^{4}h h_{t} \  dx=-\int_{\RR}(hv)_{x} \mathscr{L}\pa_{x}^{4}h \ dx - \int_{\RR} v_{x} \mathscr{L}\pa_{x}^{4}h \  dx.
\end{align}
On the other hand, multiplying the second equation in  \eqref{eq:Boussinesq} by $\pa_{x}^{6}v$ and integrating we obtain
\begin{align}\label{eq:H3v}
\int_{\RR} v_{t} \pa_{x}^{6}v\  dx &=-\int_{\RR}vv_{x} \pa_{x}^{6}v \ dx - \int_{\RR}[\mathscr{L}, \mathscr{N}h] h \pa_{x}^{6}v  \ dx - \int_{\RR} \mathscr{N}h \pa_{x}^{6} v \  dx .
\end{align}
Since $-\mathscr{L}=\pa_{x}\mathscr{N}$, integrating by parts we find that the last term in \eqref{eq:Lh} is given by
\begin{equation}\label{rewrittingI2}
 - \int_{\RR} v_{x} \mathscr{L}\pa_{x}^{4}h \  dx=\int_{\RR} v_{x} \mathscr{N}\pa_{x}^{5}h \  dx= -\int_{\RR} \pa_{x}^{6} v \mathscr{N}h \ dx .   
 \end{equation}
Moreover, we have that
\begin{equation}\label{rewritingtemporal}
\int_{\RR} h_{t}\mathscr{L}\pa_{x}^{4}h  \  dx=\frac{1}{2}\frac{d}{dt}\norm{\sqrt{\mathscr{L}}\pa_{x}^{2}h} ^{2}_{L^{2}}, \quad  -\int_{\RR} v_{t} \pa_{x}^{6}v\  dx=  \frac{1}{2}\frac{d}{dt}\norm{\pa_{x}^{3}v}^{2}_{L^{2}}.
\end{equation}
Then, adding \eqref{eq:Lh} and \eqref{eq:H3v}, and using \eqref{rewrittingI2},\eqref{rewritingtemporal} lead to
\begin{equation}\label{adg13}
\frac{1}{2}\frac{d}{dt}\left( \norm{\sqrt{\mathscr{L}}\pa_{x}^{2}h}_{L^{2}}^{2}+ \norm{\pa_{x}^{3}v}_{L^{2}}^{2} \right) =\underbrace{ -\int_{\RR}(hv)_{x} \mathscr{L}\pa_{x}^{4}h \ dx}_{I_{1}}+\underbrace{\int_{\RR}vv_{x} \pa_{x}^{6}v \ dx}_{I_{2}}+\underbrace{\int_{\RR}[\mathscr{L}, \mathscr{N}h] h \pa_{x}^{6}v  \ dx}_{I_{3}}.
\end{equation}
We now estimate each of the integrals $I_{i}$ in (\ref{adg13}).
First, notice that integration by parts yields 
\begin{equation*}
I_{2}=-\int_{\RR} \pa_{x}^{2}(vv_{x})\pa_{x}^{3}v  \  dx =-\frac{3}{2}\int_{\RR}(\pa_{x}^{3}v)^{2} v_{x} \  dx,
\end{equation*}
and thus
\begin{align}\label{est:I2}
|I_{2}|\leq C  \norm{\pa_{x}v}_{L^{\infty}}\norm{\pa_{x}^{3}v}^{2}_{L^{2}}\leq  C \norm{\pa_{x}^{3}v}^{3}_{L^{2}},
\end{align}
where in the second inequality we used the Sobolev embedding $H^{\frac{1}{2}+\epsilon}(\RR)\hookrightarrow L^{\infty}(\RR)$ for $\epsilon>0.$

Integrating by parts the first term $I_{1}$ we find that
\begin{equation*}
I_{1}=-\int_{\RR}\pa_{x}^{3}(hv)\pa_{x}^{2}\mathscr{L}h \ dx =\underbrace{-\int_{\RR}\left( \pa_{x}^{3}v h+ \pa_{x}^{2}vh_{x}+v_{x}\pa_{x}^{2}h \right)\pa_{x}^{2}\mathscr{L}h \ dx}_{J_{1}}\underbrace{-\int_{\RR}v\pa_{x}^{3}h \pa_{x}^{2}\mathscr{L}h \ dx}_{J_{2}} .
\end{equation*}
We use H\"older inequality to estimate $J_{1}$ as
\begin{align}\label{est:J1}
|J_{1}|&\leq C \left( \norm{h}_{L^{\infty}}\norm{\pa_{x}^{3}v}_{L^{2}}+\norm{\pa_{x}^{2}v}_{L^{\infty}}\norm{\pa_{x}h}_{L^{2}}+\norm{v_{x}}_{L^{\infty}}\norm{h_{xx}}_{L^{2}}\right)\norm{\pa_{x}^{2}\mathscr{L}h}_{L^{2}} \nonumber \\
& \leq C \norm{h_{xx}}_{L^{2}}\norm{\pa_{x}^{3}v}_{L^{2}}\norm{\pa_{x}^{2}\mathscr{L}h}_{L^{2}}.
\end{align}
As for $J_{2}$, using the identity (cf. \eqref{comp:helmholtz})
\begin{equation}\label{adg14}
\mathscr{L}=-\partial_{x}^2 (1-\pa_{x}^{2})^{-1}= \mathrm{Id}-\mathscr{Q},
\end{equation}
we obtain that
\begin{equation*}\label{rewrite:J2}
J_{2}=-\int_{\RR}v \pa_{x}^{3}h \left(\pa_{x}^{2}h-\mathscr{Q}\pa_{x}^{2}h\right) \ dx =- \frac{1}{2}\int_{\RR}v \pa_{x}(\pa_{x}^{2}h)^{2} \ dx + \int_{\RR}v \pa_{x}^{3}h \mathscr{Q}\pa_{x}^{2}h \ dx.
\end{equation*}
Using integration by parts in both terms we infer that
\begin{equation}\label{est:J2}
|J_{2}|\leq C \norm{v_{x}}_{L^{\infty}} \norm{h_{xx}}^{2}_{L^{2}}\leq \norm{\pa_{x}^{3}v}_{L^{2}} \norm{h_{xx}}^{2}_{L^{2}}.
\end{equation}
In a similar way, expanding the commutator and using (\ref{adg14}) again lead to
\begin{align*}
I_{3}=\int_{\RR} \left( \mathscr{L}(\mathscr{N}h h)-\mathscr{N}h\mathscr{L}h\right)\pa_{x}^{6}v \  dx=\underbrace{- \int_{\RR} \mathscr{Q}(\mathscr{N}h h)\pa_{x}^{6}v \  dx}_{K_{1}} +\underbrace{ \int_{\RR} \mathscr{N}h  \mathscr{Q}h \pa_{x}^{6}v \  dx}_{K_{2}}.
\end{align*}
We rewrite both terms $K_{1}$ and $K_{2}$ as
\begin{align*}
K_{1}= -\int_{\RR} (\mathscr{N}hh)_{x} \mathscr{L}\pa_{x}^{3}v \  dx,
\end{align*}
and
\begin{align*}
K_{2}=-\int_{\RR} (\mathscr{N}h\mathscr{Q}h)_{xxx} \pa_{x}^{3}v \  dx &=- \frac{1}{2} \int_{\RR} \pa_{x}^{4}\left( (\mathscr{Q}h)^{2}
\right) \pa_{x}^{3}v \  dx =-\int_{\RR}\pa_{x}^{2}\left( (\mathscr{Q}h_{x})^{2}-\mathscr{Q}h\mathscr{L}h\right) \pa_{x}^{3}v \  dx.
\end{align*}
Using the fact that, cf. (\ref{adg4}), 
\begin{equation}\label{adg15}
\norm{\sqrt{\mathscr{L}}f}_{L^2}\leq C \norm{f}_{L^{2}},\; f\in L^{2},
\end{equation}
then the term $K_{1}$ can be bounded by
\begin{equation}\label{bound:K1prev}
|K_{1}|\leq C \norm{h_{xx}}_{L^{2}}^{2}\norm{\mathscr{L}\pa_{x}^{3}v}_{L^{2}} \leq \norm{h_{xx}}_{L^{2}}^{2}\norm{\pa_{x}^{3}v}_{L^{2}},
\end{equation}
for some constant $C$. As far as $K_{2}$ is concerned, by expanding the derivatives, tedious but a straightforward computation and using (\ref{adg15}) again shows that
\begin{equation}\label{bound:K2prev}
|K_2| \leq C \norm{\mathscr{Q}h}_{L^{\infty}} \norm{\pa_{x}^{3}v}_{L^{2}}\norm{\sqrt{\mathscr{L}}\pa_{x}^2 h}_{L^{2}} \leq C    \norm{\pa_{x}^{3}v}_{L^{2}}\norm{h_{xx}}_{L^{2}} \norm{\sqrt{\mathscr{L}}\pa_{x}^2 h}_{L^{2}},
\end{equation}
for some constant $C$. From \eqref{est:I2}, \eqref{est:J1}, \eqref{est:J2}, \eqref{bound:K1prev}, and \eqref{bound:K2prev} we conclude that

\begin{align}
\frac{1}{2}\frac{d}{dt}\left( \norm{\sqrt{\mathscr{L}}\pa_{x}^{2}h}_{L^{2}}^{2}+ \norm{\pa_{x}^{3}v}_{L^{2}}^{2} \right) &\leq  C \bigg( \norm{\pa_{x}^{3}v}^{3}_{L^{2}}+\norm{\pa_{x}^{3}v}_{L^{2}} \norm{h_{xx}}^{2}_{L^{2}} \nonumber \\
&\hspace{3cm} +\norm{\pa_{x}^{3}v}_{L^{2}}\norm{h_{xx}}_{L^{2}} \norm{\sqrt{\mathscr{L}}\pa_{x}^2 h}_{L^{2}} \bigg).\label{adg16}
\end{align}

Applying Lemma \ref{lema:LtoL2} and Young's inequality to (\ref{adg16}) leads to
\begin{align}\label{H2:estimate:system3}
\frac{1}{2}\frac{d}{dt}\left( \norm{\sqrt{\mathscr{L}}\pa_{x}^{2}h}_{L^{2}}^{2}+ \norm{\pa_{x}^{3}v}_{L^{2}}^{2} \right) \leq  C \left( \norm{\sqrt{\mathscr{L}}\pa_{x}^2 h}^{3}_{L^{2}}+\norm{h}^{3}_{L^{2}} +\norm{\pa_{x}^{3}v}^{3}_{L^{2}}\right),
\end{align}
for some constant $C$. 
Combining estimates \eqref{L2:estimate:system3} and \eqref{H2:estimate:system3} yields
\begin{equation}\label{energy:estimate:bouss:nolocal}
\mathcal{E}'(t)\leq C \mathcal{E}^{\frac{3}{2}}(t),
\end{equation}
which ensures a local time of existence $t^{*}>0$ such that $\mathcal{E}(t)\leq 4 \mathcal{E}(0), 0<t<t^{*}.$
In order to construct the solutions, we first define the approximate problems using mollifiers (cf. proof of Theorem \ref{theorem1}). More precisely, the regularized system is given by
\begin{subequations}
	\begin{align}
		h^{\epsilon}_t+\mathcal{J}_{\epsilon}(\mathcal{J}_{\epsilon}h^{\epsilon}\mathcal{J}_{\epsilon}v^{\epsilon})_x+\mathcal{J}_{\epsilon}\mathcal{J}_{\epsilon}v^{\epsilon}_x&=0, \label{regularized:eq:Boussinesq} \\
		v^{\epsilon}_t+\mathcal{J}_{\epsilon}(\mathcal{J}_{\epsilon}v^{\epsilon}\partial_{x}\mathcal{J}_{\epsilon}v^{\epsilon})+\left[\mathscr{L},\mathscr{N}h^{\epsilon}\right]h^{\epsilon}+\mathscr{N}h^{\epsilon}&=0.\label{regularized:eq:Boussinesq2}
	\end{align}
\end{subequations}

 By the properties of the mollifiers we can repeat the previous energy estimates and provide the same a priori bounds for the regularized system of \eqref{regularized:eq:Boussinesq}-\eqref{regularized:eq:Boussinesq2}. Hence, we will find a uniform time of existence $T_{max}>0$ for the sequence of regularized problems. To conclude the argument, we pass to the limit. Furthermore, the continuity in time for the solution (instead of merely weak continuity) is obtained as follows: first, the energy estimate \eqref{energy:estimate:bouss:nolocal} yields the strong right continuity at $t=0$. Moreover, it is easy to check that changing variables $\tilde{t}=-t$ provides the strong left continuity at $t=0$ and hence the continuity in time for the solutions. To conclude let us remark that the uniqueness follows by a classical contradiction argument as in Theorem \ref{theorem1}.
\end{proof}

\section{Well-posedness in Sobolev spaces for the bidirectional non-local wave equation}\label{sec:bidirec}
In this section, we provide the local-well posedness on the bidirectional non-local wave equation \eqref{wave}.  

\begin{theorem}\label{thn:bid:wave}
Let $(h_0,h_1)$ be such that $(h_0-1,h_1)\in H^4\times H^3$ and
$$
\|h_0-1\|_{L^\infty}<1/2.
$$
Then, there exist $0<T$ and a unique solution to \eqref{wave}
$$
(h-1,h_t)\in C([0,T],H^4\times H^3),
$$ with initial value $(h_0,h_1)$. 
\end{theorem}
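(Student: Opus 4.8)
The plan is to pass to the perturbation variables $g=h-1$ and $w=h_{t}$, to close an a priori estimate in $H^{4}\times H^{3}$ by means of a \emph{weighted} energy, and then to run the mollification/compactness scheme already used in Theorem \ref{theoremBouss}. Since the symbols of $\mathscr{L}$ and $\mathscr{N}$ vanish at $\xi=0$ we have $\mathscr{L}h=\mathscr{L}g$, $\mathscr{N}h=\mathscr{N}g$, so that \eqref{wave} is equivalent to the first order system
\begin{equation*}
g_{t}=w,\qquad w_{t}=-\mathscr{L}g+\left(hg_{x}+[\mathscr{L},\mathscr{N}g](1+g)\right)_{x}-2(hw)_{x}.
\end{equation*}
The difficulty is that the symbol of $\mathscr{L}$ is bounded, so $\mathscr{L}$ is an order-zero, non-elliptic operator: the linear part $w_{t}+\mathscr{L}g=0$ does not trade spatial regularity for temporal one, and the usual wave energy $\norm{\pa_{x}^{3}w}_{L^{2}}^{2}+\norm{\sqrt{\mathscr{L}}\pa_{x}^{3}g}_{L^{2}}^{2}$ controls $g$ only at the level of $H^{3}$ (through Lemma \ref{lema:LtoL2}), one derivative short of the claimed $H^{4}$. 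The missing top derivative on $g$ must be produced from the quasilinear terms, and this is exactly where the smallness hypothesis enters.

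I would therefore work with the weighted energy
\begin{equation*}
E(t)=\norm{g}_{L^{2}}^{2}+\norm{w}_{L^{2}}^{2}+\int_{\RR}h\,(\pa_{x}^{4}g)^{2}\,\dd x+\norm{\pa_{x}^{3}w}_{L^{2}}^{2}.
\end{equation*}
As long as $\norm{g(t)}_{L^{\infty}}<1/2$, so that $1/2<h<3/2$, the weighted term is coercive, $\tfrac12\norm{\pa_{x}^{4}g}_{L^{2}}^{2}\le\int_{\RR}h(\pa_{x}^{4}g)^{2}\,\dd x\le\tfrac32\norm{\pa_{x}^{4}g}_{L^{2}}^{2}$, and $E$ is equivalent to $\norm{g}_{H^{4}}^{2}+\norm{w}_{H^{3}}^{2}$. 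Differentiating $E$, the single genuinely top-order interaction is $2\int_{\RR}h\,\pa_{x}^{4}g\,\pa_{x}^{4}w\,\dd x$, arising from $\frac{\dd}{\dd t}\int_{\RR}h(\pa_{x}^{4}g)^{2}\,\dd x$; on its own it would require $w\in H^{4}$. The decisive point is that $(hg_{x})_{x}$ is the only term in $w_{t}$ carrying a fifth derivative of $g$, with coefficient exactly $h$, so its leading contribution to $\frac{\dd}{\dd t}\norm{\pa_{x}^{3}w}_{L^{2}}^{2}$ is, after one integration by parts, $-2\int_{\RR}h\,\pa_{x}^{4}w\,\pa_{x}^{4}g\,\dd x$ plus terms already bounded by $E$. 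The two top-order contributions cancel, and the weight $h$ in $E$ is chosen precisely so that this cancellation occurs; hence the need for $h$ to stay positive and bounded, i.e. $\norm{h_0-1}_{L^\infty}<1/2$.

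After the cancellation every remaining contribution is of lower order and controllable by $E$: the quadratic transport terms reduce, upon integration by parts, to integrals such as $\int_{\RR}(\pa_{x}h)(\pa_{x}^{3}w)^{2}\,\dd x$, $\int_{\RR}w(\pa_{x}^{4}g)^{2}\,\dd x$ and $\int_{\RR}w\,\pa_{x}^{4}g\,\pa_{x}^{3}w\,\dd x$, bounded by $\norm{\pa_{x}h}_{L^{\infty}}$ and $\norm{w}_{L^{\infty}}$; the commutator term is handled with the identity \eqref{eq:brac}, the decomposition $\mathscr{L}=\mathrm{Id}-\mathscr{Q}$ from \eqref{adg14}, the smoothing of $\mathscr{N}$ and $\mathscr{Q}$, and the Kato--Ponce estimates of Lemma \ref{Kato-Ponce commutator estimate}; the linear terms $\mathscr{L}g$ and $\mathscr{N}h$ are harmless since $\mathscr{L}$ and $\mathscr{N}$ are bounded on $L^{2}$, all $L^{\infty}$ norms being supplied by $H^{1/2+\epsilon}\hookrightarrow L^{\infty}$. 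This yields a differential inequality $E'(t)\le C\,(E(t)+E(t)^{3/2})$, hence a positive existence time and the bound $E(t)\le 2E(0)$; simultaneously $\norm{g(t)}_{L^{\infty}}\le\norm{g_{0}}_{L^{\infty}}+\int_{0}^{t}\norm{w}_{L^{\infty}}\,\dd s$ keeps $h$ inside $(1/2,3/2)$ on a perhaps smaller interval $[0,T]$, so the coercivity is self-consistent.

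To promote the a priori bound to an existence theorem I would regularize the system with the mollifiers $\mathcal{J}_{\epsilon}$ exactly as in Theorem \ref{theoremBouss}, solve the resulting ODE in $H^{4}\times H^{3}$ by Picard, observe that the energy computation above is uniform in $\epsilon$ because $\mathcal{J}_{\epsilon}$ is self-adjoint and commutes with $\pa_{x}$, and pass to the limit by weak-$*$ compactness together with strong convergence in lower-order norms. Uniqueness follows from an $L^{2}\times L^{2}$ estimate for the difference of two solutions and Gr\"onwall's inequality, while the upgrade from weak to strong continuity in time is obtained as in Theorem \ref{theoremBouss}, using right-continuity from the energy estimate and the reflection $\tilde{t}=-t$ for left-continuity. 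The principal obstacle throughout is the derivative mismatch produced by the non-elliptic operator $\mathscr{L}$, which is overcome only by the weighted energy and the attendant smallness $\norm{h_{0}-1}_{L^{\infty}}<1/2$.
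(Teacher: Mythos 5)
Your proposal is correct in outline, but it proves the theorem by a genuinely different mechanism than the paper. The paper also reduces to the perturbation $w=h-1$ and equation \eqref{wave:w}, but it keeps the \emph{unweighted} energy $\mathcal{E}(t)=\norm{w_t}_{H^3}^2+\norm{\sqrt{\mathscr{L}}\pa_x^3 w}_{L^2}^2+\norm{w}_{H^4}^2$, harvesting the missing $H^4$ control of $w$ from the \emph{linear} part of $\left((1+w)w_x\right)_x$ (the ``hidden'' term $-\tfrac12\tfrac{\dd}{\dd t}\norm{\pa_x^4 w}_{L^2}^2$ inside $M_1$); the quasilinear piece $w\,w_{xx}$ then leaves the uncancelled cubic $\tfrac12\int_{\RR} w\,\pa_t(\pa_x^4 w)^2\,\dd x$, which the paper can control only after integrating in time, integrating by parts in $t$, and using $\norm{w_0}_{L^\infty}<1/2$ to absorb the boundary term $\norm{w_0}_{L^\infty}\sup_t\norm{\pa_x^4 w}_{L^2}^2$ into $\mathsf{E}_T$, ending with the polynomial inequality \eqref{est:energy5} and a Coutand--Shkoller-type continuity argument. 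You instead symmetrize with the weight $h$, so that the same cubic becomes the harmless $\tfrac12\int_{\RR} h_t(\pa_x^4 g)^2\,\dd x=\tfrac12\int_{\RR} w(\pa_x^4 g)^2\,\dd x$ and the top-order terms cancel pointwise in time, yielding the clean Gr\"onwall inequality $E'\le C\left(E+E^{3/2}\right)$ with no sup-in-time bootstrap; in your scheme the smallness hypothesis is used only for coercivity of the weight (hyperbolicity, $h$ bounded below), which in fact shows the constant $1/2$ is not structural---any initial datum with $\inf h_0>0$ would do---whereas in the paper the smallness constant is consumed in the absorption step. Three points in your write-up deserve care: (i) the remainders of $\pa_x^3(hg_x)$ and $\pa_x^3(hw)$ still pair with $\pa_x^4 w$, which your energy does not control, so each such term must be integrated by parts once more back onto $\pa_x^3 w$ before the $L^\infty$--$L^2$ bounds apply (this works, but it is where your phrase ``terms already bounded by $E$'' hides the actual computation); (ii) in the regularization step the mollifiers must be inserted symmetrically, as in the paper's \eqref{regularized:wave:w}, so that the weighted top-order cancellation survives with $\epsilon$-uniform constants---your appeal to self-adjointness of $\mathcal{J}^{\epsilon}$ is the right reason, but the placement has to be chosen to that end; (iii) the bootstrap on $\norm{g(t)}_{L^\infty}$ should allow the threshold to degrade on $[0,T]$ (e.g.\ insisting only on $h\ge 1/4$) rather than literally preserving $\norm{g(t)}_{L^\infty}<1/2$, since only some positive lower bound on $h$ is needed for the equivalence of $E$ with $\norm{g}_{H^4}^2+\norm{w}_{H^3}^2$.
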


\begin{proof}
As before, existence and uniqueness of solutions of  \eqref{wave} are based on deriving useful a priori energy estimates. To this end, we write $h=1+w$ and then equation \eqref{wave} becomes 
\begin{equation}\label{wave:w}
w_{tt}+\mathscr{L}w=\left((1+w)w_x+\left[\mathscr{L},\mathscr{N}w\right]w\right)_{x}-2\left((1+w)w_t\right)_x.
\end{equation}
We define the energy
\begin{equation}\label{def:wave3}
\mathcal{E}(t)=\norm{w_{t}}^{2}_{H^3}+\norm{\sqrt{\mathscr{L}}\pa_{x}^{3}w}^{2}_{L^2}+\norm{w}^{2}_{H^4}.
\end{equation}
Testing equation \eqref{wave:w} against $w_{t}$ and integrating by parts we have
\begin{align*}
\frac{1}{2}\frac{d}{dt}\left(\norm{w_{t}}_{L^{2}}^{2}+\norm{\sqrt{\mathscr{L}}w}_{L^{2}}^{2}\right)&=\int_{\RR}(w_{x}^{2}+(1+w)w_{xx})w_{t}  \ dx +\int_{\RR} \left(\left[\mathscr{L},\mathscr{N}w\right]w\right)_{x} w_{t} \ dx  \nonumber \\
&\quad - 2\int_{\RR}\left( (1+w)w_{tx}w_{t}+w_{x}w_{t}^{2}\right) \ dx\leq C \mathcal{E}(t)^{3/2}.
\end{align*}
In particular,
\begin{equation*}\label{L2:est:wave}
\frac{d}{dt} \norm{w_{t}}_{L^{2}}^{2}\leq C \mathcal{E}(t)^{3/2}.
\end{equation*}
Furthermore, from the Cauchy-Schwarz inequality and \eqref{def:wave3}
\begin{equation*}
\frac{d}{dt}\norm{w}_{L^{2}(\RR)}^{2}=2\int_{\RR}w_{t}w \  dx \leq 2\norm{w}_{L^2}\norm{w_{t}}_{L^2} \leq \mathcal{E}(t).
\end{equation*}
On the other hand, it holds that
\begin{align*}
\frac{1}{2}\frac{d}{dt}\left(\norm{\pa_{x}^{3}w_{t}}_{L^{2}}^{2}+\norm{\sqrt{\mathscr{L}}\pa_{x}^{3}w}_{L^{2}}^{2}\right)&=\underbrace{-\int_{\RR}(w_{x}^{2}+(1+w)w_{xx})\pa_{x}^{6}w_{t}  \ dx}_{M_{1}}\underbrace{-\int_{\RR} \left[\mathscr{L},\mathscr{N}w\right]w \pa_{x}^{7}w_{t} \ dx}_{M_{2}}  \\
&\quad + \underbrace{2\int_{\RR}\left( (1+w)w_{tx}\pa_{x}^{6}w_{t}+w_{x}w_{t}\pa_{x}^{6}w_{t}\right) \ dx}_{M_{3}}.
\end{align*}
In order to estimate each of the integrals $M_{i}$, we first notice a hiding energy extra term in $M_{1}$. Integrating by parts we obtain
\begin{align*}
M_{1}= -\frac{1}{2}\frac{d}{dt}\norm{\pa_{x}^{4}w}_{L^{2}}^{2} \underbrace{-\int_{\mathbb{R}} w_{x}^{2} \pa_{x}^{6}w_{t} \  dx}_{M_{12}}\underbrace{-\int_{\mathbb{R}}w w_{xx} \pa_{x}^{6}w_{t} \ dx}_{M_{13}}.
\end{align*}
It is easy to check that after integration by parts 
\begin{align}\label{M12:estimate}
\left|M_{12}\right| \leq \int_{\RR} \left| \pa_{x}^{3}(\pa_{x}w)^{2} \pa_{x}^{3}w_{t} \ dx \right| \leq C\left(\norm{\pa_{x}^{4}w}_{L^{2}}\norm{w_{x}}_{L^\infty}+\norm{\pa_{x}^{3}w}_{L^2}\norm{w_{xx}}_{L^\infty} \right) \norm{\pa_{x}^{3}w_{t}}_{L^2}.
\end{align}
Similarly, integrating by parts we have that
\begin{align}\label{adg16b}
M_{13}= -\int_{\RR}  \pa_{x}^{3}(ww_{xx}) \pa_{x}^{3}w_{t} \ dx= -\int_{\RR} w \pa_{x}^{5}w \pa_{x}^{3}w_{t} \ dx + \texttt{l.o.t},
\end{align}
where 
\begin{equation}\label{lot}
| \texttt{l.o.t}| \leq C\left(  \norm{\pa_{x}^{3}w}_{L^{2}}\norm{w_{xx}}_{L^{\infty}}+\norm{w_{xx}}_{L^{\infty}}\norm{w_{xx}}_{L^2}+\norm{w_x}_{L^{\infty}}\norm{\pa_{x}^{4}w}_{L^{2}}\right)\norm{\pa_{x}^{3}w_{t}}_{L^{2}},
\end{equation}
and, after integration by parts again
\begin{align}\label{adg17}
-\int_{\RR}w \pa_{x}^{5}w \pa_{x}^{3}w_{t} \  dx=\frac{1}{2}\int_{\RR} w\pa_{t}(\pa_{x}^{4}w)^{2} \ dx+\int_{\RR} w_{x}\pa_{x}^{4}w \pa_{x}^{3}w_{t} \ dx .
\end{align}
The last term in (\ref{adg17}) can be bounded as
\begin{equation}\label{lot2}
\left | \int_{\RR}w_{x}\pa_{x}^{4}w \pa_{x}^{3}w_{t} \  dx \right| \leq C \norm{w_{x}}_{L^{\infty}} \norm{\pa_{x}^{4}w}_{L^{2}}\norm{ \pa_{x}^{3}w_{t}}_{L^2}.
\end{equation}
Thus, from (\ref{adg16b}), estimates \eqref{lot},\eqref{lot2} imply that
\begin{equation}\label{rewriteM13}
M_{13}=\frac{1}{2}\int_{\RR} w \pa_{t} \left( \pa_{x}^{4}w^{2}\right) \ dx +  \mbox{l.o.t}
\end{equation}
where by means of Young's inequality we find that
\begin{align*}
| \text{l.o.t}| \leq C\left(  \norm{\pa_{x}^{3}w}_{L^{2}}\norm{w_{xx}}_{L^{\infty}}+\norm{w_{xx}}_{L^{\infty}}\norm{w_{xx}}_{L^2}+\norm{w_x}_{L^{\infty}}\norm{\pa_{x}^{4}w}_{L^{2}}\right)\norm{\pa_{x}^{3}w_{t}}_{L^{2}} \leq C \mathcal{E}(t)^{3/2}.
\end{align*}
In order to estimate the commutator term $M_{2}$, let us recall that from \eqref{comp:helmholtz} we have
\[ 
M_{2}=-\int_{\RR}\left[\mathscr{Q},\mathscr{Q}w_{x} \right]w \pa_{x}^{7}w_{t} \ dx=- \int_{\RR} \mathscr{Q}\left(\mathscr{Q}w_{x}w\right)\pa_{x}^{7}w_{t} \ dx + \frac{1}{2}\int_{\RR} \pa_{x}\left(\mathscr{Q}w\right)^{2} \pa_{x}^{7}w_{t} \ dx.
\]
Using the duality $\dot{H}^{-4}-\dot{H}^{4}$ argument and the fact that  $\mathscr{Q}$ is continuous between $H^{s}(\RR)$  and $H^{s+2}(\RR)$ for any $s\in \RR$  we readily check that
\begin{align}\label{M2:estimate}
\left| M_{2} \right| &\leq C\left( \norm{\mathscr{Q}(\mathscr{Q}w_{x}w)}_{\dot{H}^{4}}+\norm{\pa_{x}(\mathscr{Q}w)^2}_{\dot{H}^{4}}\right) \norm{\pa_{x}^{7}w_{t}}_{\dot{H}^{-4}}  \nonumber \\
&\leq C\left( \norm{\mathscr{Q}w_{x}w}_{\dot{H}^{2}}+\norm{(\mathscr{Q}w)^2}_{\dot{H}^{5}}\right) \norm{\pa_{x}^{3}w_{t}}_{L^2} \nonumber \\
&\leq C\left( \norm{w_{x}}_{L^{2}}\norm{w_{xx}}_{L^{2}}+\norm{\pa_{x}^{3}w}^{2}_{L^{2}}\right) \norm{\pa_{x}^{3}w_{t}}_{L^2} \leq C \mathcal{E}(t)^{3/2},
\end{align} 
for some constant $C$, where in the last inequality we used that $H^{s}$ is a Banach algebra for $s>\frac{1}{2}$ and Young's inequality.
Similarly, splitting $M_{3}$ and integrating by parts we infer that
\begin{align}\label{M3:estimate}
|M_{3}| &= \left|2\int_{\RR}  w_{tx}\pa_{x}^{6}w_{t}+ww_{tx}\pa_{x}^{6}w_{t}+w_{x}w_{t}\pa_{x}^{6}w_{t} \  dx\right| \nonumber \\
&\leq C\bigg(  \norm{\pa_{x}^{3}w}_{L^{2}}\norm{w_{tx}}_{L^{\infty}}+\norm{w_{xx}}_{L^{\infty}}\norm{w_{txx}}_{L^2}+\norm{w_x}_{L^{\infty}}\norm{\pa_{x}^{3}w_{t}}_{L^{2}}+\norm{\pa_{x}^{4}w}_{L^2}\norm{w_{t}}_{L^{\infty}} \nonumber\\
&\hspace{1cm} + \norm{\pa_{x}^{3}w}_{L^2}\norm{w_{xt}}_{L^{\infty}}\bigg) \times \norm{\pa_{x}^{3}w_{t}}_{L^{2}} \leq C \mathcal{E}(t)^{3/2}
\end{align}
Hence, combining estimates \eqref{M12:estimate}-\eqref{M3:estimate} and from \eqref{def:wave3} we conclude that
\begin{equation*}\label{est:energy1}
\frac{d}{dt} \mathcal{E}(t)\leq C \left( \mathcal{E}(t)^{3/2}+\mathcal{E}(t)\right)+\frac{1}{2}\int_{\RR} w \pa_{t}\left( \pa_{x}^{4}w\right) \  dx.
\end{equation*}
Integrating in time leads to
\begin{equation*}\label{adg18}
\mathcal{E}(t)\leq \mathcal{E}(0)+C\int_{0}^{t}\left(\mathcal{E}^{3/2}(s)+\mathcal{E}(s)\right) \ ds  +\frac{1}{2}\int_{0}^{t}\int_{\RR} w(x,s) \pa_{s} \left( \pa_{x}^{4} w(x,s)\right)^{2} \ dx ds.
\end{equation*}
To deal with the latter integral we use Fubini's theorem and  integrate by parts in time which yields
\begin{align}\label{energy:afterparts}
\mathcal{E}(t)&\leq \mathcal{E}(0)+C\int_{0}^{t}\left(\mathcal{E}^{3/2}(s)+\mathcal{E}(s)\right) \ ds -\frac{1}{2}\int_{0}^{t}\int_{\RR} w_{s}(x,s)  (\pa_{x}^{4} w(x,s))^{2} \ dx ds \nonumber \\
&\hspace{2cm}-\frac{1}{2}\left(\int_{\RR} w_{0}(x)(\pa_{x}^{4}w_{0}(x))^{2} \ dx-\int_{\RR}w(x,t) (\pa_{x}^{4}w(x,t))^{2} \  dx\right).
\end{align}
Defining, for $T>0$
\[ \mathsf{E}_{T}=\displaystyle\sup_{0\leq t \leq T} \mathcal{E}(t)= \displaystyle\sup_{0\leq t \leq T} \left(\norm{w_{t}}^{2}_{H^3}+\norm{\sqrt{\mathscr{L}}\pa_{x}^{3}w}^{2}_{L^2}+\norm{w}^{2}_{H^4}\right), \]
taking the supremum in time in \eqref{energy:afterparts} and H\"older's inequality lead to
\begin{align}\label{est:energy2}
	\mathsf{E}_{T}&\leq  \mathsf{E}_{0}+Ct\left(\mathsf{E}_{T}^{3/2}+	\mathsf{E}_{T}\right)  +\frac{1}{2}\norm{w_{0}}_{L^{\infty}}\norm{\pa_{x}^{4}w_{0}}_{L^{2}}^{2} +\frac{1}{2}\displaystyle\sup_{0\leq t \leq T}\left(\norm{w}_{L^{\infty}}\norm{\pa_{x}^{4}w}_{L^{2}}^{2}\right) \nonumber \\ 
	&\hspace{4cm}+  \frac{1}{2}\displaystyle\sup_{0\leq t \leq T}\int_{0}^{t} \norm{w_{s}}_{L^{\infty}}  \norm{\pa_{x}^{4} w}_{L^{2}}^{2}  ds.
\end{align}

Furthermore, writing
\[
w(x,t)=w_{0}(x)+w(x,t)-w_{0}(x)=w_{0}(x)+\int_{0}^{t} \partial_{s}w(x,s) \ ds,	
\]
then Sobolev embedding yields 
\begin{equation}\label{Linftybound}\displaystyle\sup_{0\leq t \leq T} \norm{w}_{L^{\infty}}\leq \norm{w_0}_{L^{\infty}}+t \displaystyle\sup_{0\leq t \leq T}\norm{\pa_{x}^{3}w_{t}}_{L^{2}}. 
	\end{equation}
Therefore combining \eqref{est:energy2}, \eqref{Linftybound}, and Young's inequality we find that
\begin{align}\label{est:energy3}
	\mathsf{E}_{T}&\leq  \mathsf{E}_{0}+\mathsf{E}^{3/2}_{0}+Ct\left(\mathsf{E}_{T}^{3/2}+	\mathsf{E}_{T}\right) +\norm{w_0}_{L^{\infty}}\displaystyle\sup_{0\leq t \leq T} \norm{\pa_{x}^{4}w}_{L^{2}}^{2} ,
\end{align}
for some constant $C$. Now taking $w_{0}$ with $\norm{w_0}_{L^{\infty}}=\frac{1}{2}$, from the definition of $E_{T}$ and \eqref{est:energy3} it holds that
\begin{align}
	\mathsf{E}_{T}&\leq 2 \left(\mathsf{E}_{0}+\mathsf{E}^{3/2}_{0}\right)+2Ct\left(\mathsf{E}_{T}^{3/2}+\mathsf{E}_{T}\right),\nonumber
\end{align}
which is valid for $t\in (0,t_{1}), t_{1}= \displaystyle\min\{1,T\}$. (Sharper inequalities can be obtained, if necessary, from smaller choices of $\norm{w_0}_{L^{\infty}})$.

Noticing that  $\mathsf{E}^{q}_{T}\leq \mathsf{E}^{2}_{T}+1$ for $q=1$ and $q=\frac{3}{2}$ and the fact that $t\in (0,t_{1}), t_{1}= \displaystyle\min\{1,T\}$ (in particular $0<t< 1$), we find that the polynomial estimate
\begin{align}\label{est:energy5}
	\mathsf{E}_{T}&\leq\mathsf{N}_{0}+4Ct\mathsf{E}_{T}^{2},
\end{align}
holds. Here we have used the notation $\mathsf{N}_{0}=2 \left(\mathsf{E}_{0}+\mathsf{E}^{3/2}_{0}+2C\right)$. Similar polynomial estimates as \eqref{est:energy5} have been derived in \cite{Coutand-Shkoller-06}. Let us define the polynomial $\mathcal{P}_{t}(y)= 4Cty^2-y+\mathsf{N_{0}}$, so that $\mathcal{P}_{t}(\mathsf{E_{T}})\geq 0$. The roots of $\mathcal{P}_{t}$ are computed as
\[ y_{\pm}= \frac{1 \pm \sqrt{1-16Ct\mathsf{N}_{0}}}{8Ct}.\]

\begin{figure}[h!]
\label{picture}
\includegraphics[width=0.6\textwidth]{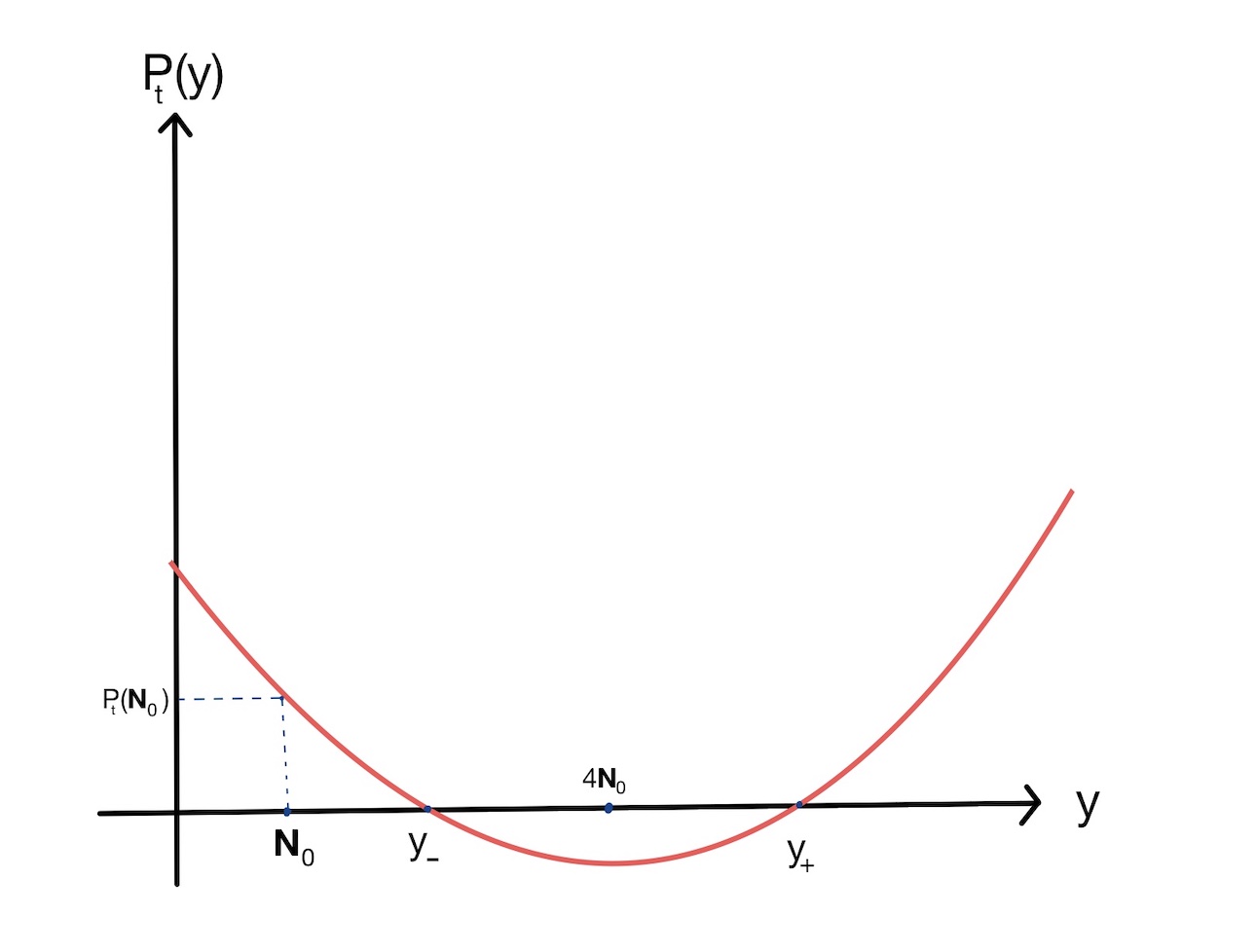}
\centering
\caption{Positive real roots of the polynomial $P_{t}(y)$ for $t\ll 1$.}
\end{figure}
Taking $0<t\ll 1$ (for instance $0<t=\frac{1}{32C\mathsf{N}_{0}}$) we have (cf. Figure 1)
\[ 0<y_{-}=4\mathsf{N_{0}}\left( 1 - \sqrt{1/2} \right) <  y_{+}=4\mathsf{N_{0}}\left( 1 + \sqrt{1/2} \right)\]
and 
\[0<y_{-}< 4\mathsf{N}_{0}< y_{+}.\]

Furthermore, the application $t\to\mathsf{E}_{t}$ is continuous for $t\in [0,T)$. This, together with the fact that
$\mathsf{E}_{0}<\mathsf{N}_{0}$ and $\mathcal{P}_{t}(\mathsf{N}_{0})>0$, implies that 
\begin{equation}\label{est:energy6}
	\forall t\in  (0, t^{\star}), \quad  \mathsf{E}_{T}< 4 \mathsf{N}_{0},  
	\end{equation}
where $t^{\star}=\displaystyle\min\{\frac{1}{32C\mathsf{N}_{0}},1,T \}$.

Similarly as before, in order to perform the a priori estimates rigorously and construct the local existence of solutions we follow  a regularization procedure. More precisely, the regularized version of \eqref{wave:w}  takes the form

\begin{equation}\label{regularized:wave:w}
w^{\epsilon}_{tt}+\mathscr{L}w^{\epsilon}=\left(\mathcal{J}^{\epsilon}\left((1+\mathcal{J}^{\epsilon}w^{\epsilon})\mathcal{J}^{\epsilon}w^{\epsilon}_x\right)+\left[\mathscr{L},\mathscr{N}w^{\epsilon}\right]w^{\epsilon}\right)_{x}-2\mathcal{J}^{\epsilon}\left((1+\mathcal{J}^{\epsilon}w^{\epsilon})\mathcal{J}^{\epsilon}w^{\epsilon}_t\right)_x.
\end{equation}
Repeating the same a priori estimates for the regularized  equation \eqref{regularized:wave:w} we can deduce the analogue of \eqref{est:energy6} namely
 \begin{equation}\label{est:energy7}
 	\forall t\in  (0, t^{\star}), \quad  \mathsf{E}_{T}< 4 \mathsf{N}_{0},  
 \end{equation}
where $t^{\star}=\displaystyle\min\{\frac{1}{32C\mathsf{N}_{0}},1,T^{\epsilon} \}$. Notice that the time of existence of the solution depends a priori on the regularization parameter $\epsilon$. Nevertheless, the following argument shows that \eqref{est:energy7} holds independently of the parameter $\epsilon$. Indeed, take $T^{\epsilon}$ the first time such that 
\[ \mathsf{E}_{T^{\epsilon}}=8\mathsf{N}_{0}.\]
We observe that the precise choice of the quantity $8\mathsf{N}_{0}$ is not special. One could choose also any other quantity bigger than $ 4\mathsf{N}_{0}$ to make the argument work.
If the previous equality does not hold, that is, $\mathsf{E}_{T^{\epsilon}}<8\mathsf{N}_{0}$, this implies that $T^{\epsilon}=\infty,$ and hence we conclude (since $t^{\star}$ is the minimum) that $t^\star$ is independent of $\epsilon$. On the other hand if $T^{\epsilon}$ is finite then $t^{\star}=\displaystyle\min\{\frac{1}{32C\mathsf{N}_{0}},1,T^{\epsilon} \}=\displaystyle\min\{\frac{1}{32C\mathsf{N}_{0}},1\}$. Indeed, this assertion follows from  invoking the continuity of $\mathsf{E}_{T}$ of the regularized problem, and using that by construction $\mathsf{E}_{T^{\epsilon}}=8\mathsf{N}_{0}$ and $\mathsf{E}_{t^{\star}}<4\mathsf{N}_{0}$. This concludes the proof. 
%
\end{proof}

\section{Well-posedness in Sobolev spaces for unidirectional non-local wave model}\label{sec:uni:wp}
The main goal of this section is to prove the local well-posedness of equation \eqref{wave:uni} in $L^2$ based Sobolev spaces. Note first that, using \eqref{comp:helmholtz}, we have
$ -\mathscr{L}=\mathscr{Q}-1,$
and \eqref{wave:uni} can be rewritten as (we set $\varepsilon=1$ for simplicity)

\begin{equation}\label{final:eq:uni:2}
h_{t}=-\frac{1}{2}\left(3 hh_{x}+[\mathscr{Q},\mathscr{Q}h_{x}]h - \mathscr{Q}h_{x}-h_{x} \right). 
\end{equation}
For this equation, we show the following local well-posedness result.
\begin{theorem}\label{theorem1}
Let $s>\frac{3}{2}$  and $h_0\in H^{s}(\RR)$ {with mean zero}. Then there exist $T_{\max}>0$ and a unique local solution $h\in C([0,T_{\max}),H^{s}(\RR))$ of \eqref{final:eq:uni:2} with $h(0)=h_{0}$.
\end{theorem}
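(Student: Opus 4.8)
The plan is to treat \eqref{final:eq:uni:2} as a quasilinear perturbation of the inviscid Burgers equation and to run the classical Kato energy method in $H^s(\RR)$, $s>\tfrac32$. First I would apply $\Lambda^s$ to \eqref{final:eq:uni:2}, pair with $\Lambda^s h$ in $L^2$, and track the four contributions of the right-hand side. The transport term $\tfrac32 hh_x$ is the only genuinely quasilinear piece: writing $\Lambda^s(hh_x)=h\,\Lambda^s h_x+[\Lambda^s,h]h_x$, the first part integrates by parts to $-\tfrac12\int_{\RR} h_x(\Lambda^s h)^2\,dx$, while the commutator is controlled by the Kato--Ponce estimate (Lemma \ref{Kato-Ponce commutator estimate}) as $\norm{[\Lambda^s,h]h_x}_{L^2}\lesssim \norm{h_x}_{L^\infty}\norm{\Lambda^s h}_{L^2}$. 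Since $s>\tfrac32$ gives $H^{s-1}\hookrightarrow L^\infty$ and hence $\norm{h_x}_{L^\infty}\lesssim \norm{h}_{H^s}$, this term contributes at most $C\norm{h}_{H^s}^3$.

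The remaining terms are lower order. The pure transport $\tfrac12 h_x$ is skew-adjoint, so $\langle \Lambda^s h_x,\Lambda^s h\rangle=0$ and it drops out. The linear smoothing term $\mathscr{Q}h_x$ and the nonlocal commutator $[\mathscr{Q},\mathscr{Q}h_x]h=\mathscr{Q}\big((\mathscr{Q}h_x)h\big)-(\mathscr{Q}h_x)(\mathscr{Q}h)$ both gain regularity because $\mathscr{Q}\colon H^\sigma\to H^{\sigma+2}$ is bounded; using that $H^s$ is a Banach algebra for $s>\tfrac12$ I would bound their $H^s$ norms by $\norm{h}_{H^s}$ and $\norm{h}_{H^s}^2$ respectively, so that after Cauchy--Schwarz they contribute $\lesssim \norm{h}_{H^s}^2+\norm{h}_{H^s}^3$. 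Altogether this yields the differential inequality $\frac{d}{dt}\norm{h}_{H^s}^2\le C(\norm{h}_{H^s}^2+\norm{h}_{H^s}^3)$, which integrates to a positive existence time $T_{\max}$ together with an a priori bound on $[0,T_{\max})$.

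To turn the a priori estimate into an existence proof I would, mirroring the regularization scheme used in the proofs of Theorems \ref{theoremBouss} and \ref{thn:bid:wave}, smooth \eqref{final:eq:uni:2} with Friedrichs mollifiers $\mathcal{J}_\epsilon$, turning the regularized problem into an ODE on $H^s(\RR)$ solvable by Picard iteration. The mollified equations satisfy the same energy estimate with constants uniform in $\epsilon$, giving a common time of existence; I would then extract a limit, using that the family is uniformly bounded in $C([0,T],H^s)$ and equicontinuous in a weaker norm, and verify that the limit solves \eqref{final:eq:uni:2}. The conservation form \eqref{adg8} shows $\frac{d}{dt}\int_{\RR} h\,dx=0$, so the mean-zero condition on $h_0$ is propagated by the flow.

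For uniqueness, given two solutions $h_1,h_2$ issued from the same data, the difference $w=h_1-h_2$ solves a linear nonlocal transport equation whose coefficients are controlled by $\norm{h_i}_{H^s}$; an $L^2$ energy estimate (losing one derivative is harmless since $s>\tfrac32$) gives $\frac{d}{dt}\norm{w}_{L^2}^2\lesssim \norm{w}_{L^2}^2$, and Gronwall forces $w\equiv 0$. Strong continuity in time would follow as in the companion proofs, from the limiting energy identity giving right continuity at $t=0$ and a time-reversal change of variable giving left continuity. I expect the main obstacle to be purely the quasilinear transport term: closing the top-order $\Lambda^s$ estimate without losing a derivative is exactly what the commutator bound buys us, whereas the genuinely nonlocal pieces are smoothing and hence benign.
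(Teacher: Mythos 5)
Your proposal is correct and follows essentially the same route as the paper: an $H^s$ energy estimate with the Kato--Ponce commutator for the Burgers term, the two-derivative smoothing of $\mathscr{Q}$ for the nonlocal commutator, Friedrichs mollification plus uniform bounds and time reversal for existence and strong continuity, and an $L^2$ Gronwall argument for uniqueness. The only (harmless) deviation is that you bound the linear terms $\mathscr{Q}h_x$ and $h_x$ by $\norm{h}_{H^s}$, whereas the paper observes they cancel exactly in the energy pairing since their Fourier symbols are purely imaginary and odd; your version merely adds a benign $\norm{h}_{H^s}^2$ term to the differential inequality.
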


\begin{proof}
The proof follows from the combination of appropriate a priori energy estimates and the use of a suitable approximation procedure using mollifiers, cf. \cite{Majda-Bertozzi-2001}. Thus, we first focus in deriving a priori energy estimates and later comment briefly on the approximation procedure to construct the solution. We begin by reminding (cf. Section \ref{sec:quantities}) that the mean property is conserved in time by the solutions
\[ \int_{\RR} h(x,t) \ dx=\int_{\RR} h_0(x) \ dx, \]
as well as the $L^{2}$ norm
\begin{equation*}
 \int_{\RR} h^{2}(x,t) \ dx=\int_{\RR} h^{2}_0(x) \ dx.
 \end{equation*}
Furthermore, applying $\Lambda^{s}$ to (\ref{final:eq:uni:2}), multiplying by $\Lambda^{s}h$ and integrating we obtain

\begin{align}
\frac{1}{2}\frac{d}{dt}\norm{h}_{\dot{H}^{s}}^{2}&=-\frac{3}{2}\int_{\RR} \Lambda^{s}(hh_{x})\Lambda^{s}h \ dx-\frac{1}{2}\int_{\RR}\Lambda^{s}\left([\mathscr{Q},\mathscr{Q}_{x}h]h\right)\Lambda^{s}h \ dx \label{Hs:eq:1} \\
&\hspace{1cm}+\frac{1}{2}\int_{\RR} \Lambda^{s}\mathscr{Q}h_{x} \Lambda^{s}h \ dx +\frac{1}{2}\int _{\RR}\Lambda^{s}h_{x} \Lambda^{s}h\ dx. \nonumber
\end{align}
Using the self-adjointness of the operator $\mathscr{Q}$, a straightforward computation shows that the last two terms on the right hand-side of \eqref{Hs:eq:1} are zero. The first term can be estimated by means of the classical Kato-Ponce commutator as follows. Using integration by parts, we rewrite 
\begin{align*}
\int_{\RR} \Lambda^{s}(hh_{x})\Lambda^{s}h \ dx&= \int_{\RR} [\Lambda^{s},h]h_{x}\Lambda^{s}h \ dx + \int_{\RR} h \Lambda^{s}h_{x}\Lambda^{s}h \ dx \\
&= \int_{\RR} [\Lambda^{s},h]h_{x}\Lambda^{s}h \ dx-\frac{1}{2} \int_{\RR} h_{x} |\Lambda^{s}h|^{2} \ dx.
\end{align*}
Now, invoking the first equation in Lemma \ref{Kato-Ponce commutator estimate} with $p=2, p_{1}=\infty, p_{2}=2$, we have that
\begin{align}
\av{\int_{\RR} \Lambda^{s}(hh_{x})\Lambda^{s}h \ dx} &\lesssim  \norm{[\Lambda^{s},h]h_{x}}_{L^{2}}\norm{\Lambda^{s}h}_{L^{2}}+ \norm{h_{x}}_{L^\infty}\norm{\Lambda^{s}h}^{2}_{L^{2}} \nonumber \\
&\lesssim  \norm{h_{x}}_{L^\infty}\norm{\Lambda^{s}h}^{2}_{L^{2}}. \label{Hs:est:1}
\end{align}
The second term on the right hand side in \eqref{Hs:eq:1} can be bounded as follows.
Expanding the commutator and using the self-adjointness of $\mathscr{Q}$ yield
\begin{align}
\int_{\RR}\Lambda^{s}\left([\mathscr{Q},\mathscr{Q}_{x}h]h\right)\Lambda^{s}h \ dx &= \int_{\RR} \Lambda^{s}(\mathscr{Q}h_{x}h)\Lambda^{s}\mathscr{Q}h \ dx -\int_{\RR} \Lambda^{s}(\mathscr{Q}h_{x}\mathscr{Q}h)\Lambda^{s}h \ dx.
\end{align}
Then applying the second estimate in Lemma \ref{Kato-Ponce commutator estimate} with $p=2, p_{1},p_{3}=\infty, p_{2},p_{4}=2$, the Sobolev embedding  $H^{\frac{1}{2}+\epsilon}(\RR)\hookrightarrow L^{\infty}(\RR)$ for $\epsilon>0$, and the fact that  $\mathscr{Q}$ is continuous between $H^{s}(\RR)$  and $H^{s+2}(\RR)$ for any $s\in \RR$  we find that
\begin{align*}
 \av{\int_{\RR} \Lambda^{s}(\mathscr{Q}h_{x}h)\Lambda^{s}\mathscr{Q}h \ dx} &\lesssim \norm{\Lambda^{s}(\mathscr{Q}h_{x}h)}_{L^{2}}\norm{\Lambda^{s}\mathscr{Q}h}_{L^{2}} \\
&\lesssim \left( \norm{\Lambda^{s}\mathscr{Q}h_{x}}_{L^{\infty}}\norm{h}_{L^{2}}+\norm{\mathscr{Q}h_{x}}_{L^{\infty}}\norm{\Lambda^{s}h}_{L^{2}} \right)\norm{\Lambda^{s}\mathscr{Q}h}_{L^{2}}
 \\
&\lesssim \left( \norm{h}_{H^{s-\frac{1}{2}+\epsilon}}\norm{h}_{L^{2}}+\norm{h}_{H^{-\frac{1}{2}+\epsilon}}\norm{h}_{H^{s}}\right) \norm{h}_{H^{s-2}}.
\end{align*}
Similarly, one can show that
\[ \av{\int_{\RR} \Lambda^{s}(\mathscr{Q}h_{x}\mathscr{Q}h)\Lambda^{s}h \ dx} \lesssim \left( \norm{h}_{H^{s-\frac{1}{2}+\epsilon}}\norm{h}_{H^{-2}}+\norm{h}_{H^{-\frac{1}{2}+\epsilon}}\norm{h}_{H^{s-2}}\right) \norm{h}_{H^{s}}.
\]
Thus, 
\begin{align}\label{Hs:est:2}
\av{\int_{\RR}\Lambda^{s}\left([\mathscr{Q},\mathscr{Q}_{x}h]h\right)\Lambda^{s}h \ dx} \leq C\norm{h_{0}}_{L^{2}} \left(\norm{h}_{H^{s-\frac{1}{2}+\epsilon}}+\norm{h}_{H^{s-2}}\right) \norm{h}_{H^{s}}
\end{align} 
where we have used that $\norm{h}_{H^{-\frac{1}{2}+\epsilon}}+\norm{h}_{H^{-2}}\leq C \norm{h}_{L^{2}}=C\norm{h_0}_{L^{2}}$, for some constant $C$.

Therefore, combining estimates \eqref{Hs:est:1}, \eqref{Hs:est:2} leads to
\begin{equation}\label{Hs:est:prev:3}
\frac{d}{dt}\norm{h}_{H^{s}}^{2}\leq C \norm{h_{x}}_{L^{\infty}}  \norm{h}_{H^{s}}^{2}+C \norm{h_{0}}_{L^{2}} \left(\norm{h}_{H^{s-\frac{1}{2}+\epsilon}}+\norm{h}_{H^{s-2}}\right) \norm{h}_{H^{s}}
\end{equation}
which in particular gives the following inequality
\begin{equation}\label{Hs:est:prev:4}
\frac{d}{dt}\norm{h}_{H^{s}}^{2}\leq C \left(\norm{h_{x}}_{L^{\infty}}+ 1 \right)  \norm{h}_{H^{s}}^{2}.
\end{equation}
Using the Sobolev embedding  $H^{\frac{1}{2}+\epsilon}(\RR)\hookrightarrow L^{\infty}(\RR)$, $\epsilon>0$, we observe that
\begin{equation}\label{Hs:est:3}
\frac{d}{dt}\norm{h}_{H^{s}}^{2}\leq C \norm{h}_{H^{s}}^{3},
\end{equation}
for some constant $C$ independent of $t$.
Defining $\mathcal{E}(t)=\norm{h}_{H^{s}}^{2}$, estimate \eqref{Hs:est:3} leads to the following differential equation
\begin{equation}\label{dif:eq:energy:uni}
	 \mathcal{E}'(t)\leq C\mathcal{E}^{3/2}(t), 
  \end{equation}
with $C=C_{s}>0$  which ensures a uniform time of existence $T_{\max}>0$ such that
\[ \mathcal{E}(t)\leq 4\mathcal{E}(0), 0<t<T_{\max}.\]
Once this uniform time of existence has been obtained, the local existence result follows classical regularization procedure (cf. \cite{Majda-Bertozzi-2001}) as follows. First, we consider a symmetric and positive mollifier $\mathcal{J}\in C^{\infty}_{c}$, $\mathcal{J}(x)=J(|x|)$ such that $\int_{\RR}
\mathcal{J}=1$. For $\epsilon>0$ we define $\mathcal{J}^{\epsilon}= \frac{1}{\epsilon}\mathcal{J}(\frac{x}{\epsilon})$ and consider the regularized problem
\begin{align}\label{reg:final:eq}
\pa_{t}h^{\epsilon}=-\frac{1}{2}\big(\mathcal{J}^{\epsilon}(\mathcal{J}^{\epsilon}h \partial_{x}\mathcal{J}^{\epsilon}h) + [\mathscr{Q},\mathscr{Q}h^{\epsilon}_{x}]h^{\epsilon} - \mathscr{Q}h^{\epsilon}_{x}-\mathcal{J}^{\epsilon}\mathcal{J}^{\epsilon}h^{\epsilon}_{x} \big).
\end{align}
In \eqref{reg:final:eq}, the conserved quantities are also preserved by the flow  and the previous bounds \eqref{Hs:est:1}-\eqref{Hs:est:2} and thus \eqref{Hs:est:prev:3} hold. Therefore, we may find a time of existence $T^{\star}>0$ for the sequence of regularized problems. 
Using  compactness arguments and passing to the limit conclude the proof of existence.  
The continuity in time for the solution (instead of merely weak continuity) is obtained by classical arguments (cf. \cite{Majda-Bertozzi-2001}): On the one hand, the differential equation \eqref{dif:eq:energy:uni} yields the strong right continuity at $t=0$. On the other hand, it is easy to check that changing variables $\tilde{t}=-t$, we can repeat once again the same bounds and provide the strong left continuity at $t=0$. Combining both arguments shows the continuity in time of the solution.

As for uniqueness, let $h^{1},h^{2}\in C([0,T_{\max}),H^{s}(\RR))$ be two solutions corresponding to the same initial condition and denote $\widehat{h}=h^{2}-h^{1}$. Then $\widehat{h}$ satisfies
\begin{equation}\label{adg10}
\pa_{t}\widehat{h}=-\frac{1}{2}\left(3 h^{2}h^{2}_{x}-3 h^{1}h^{1}_{x}+[\mathscr{Q},\mathscr{Q}h^{2}_{x}]h^{2}-[\mathscr{Q},\mathscr{Q}h^{1}_{x}]h^{1} - \mathscr{Q}\widehat{h}_{x}-\widehat{h}_{x}  \right).
\end{equation}
Then, multiplying (\ref{adg10}) by  $\widehat{h}$ and integrating,
we have
\begin{align*}
\frac{1}{2}\frac{d}{dt}\norm{\widehat{h}}_{L^{2}}^{2}=-\frac{3}{2} \int_{\RR} \left( \widehat{h}h^{2}_{x}+h^{1}\widehat{h}_{x}\right) \widehat{h} \ dx+ \int_{\RR} \bigg[ [\mathscr{Q},\mathscr{Q}h^{2}_{x}]h^{2}-[\mathscr{Q},\mathscr{Q}h^{1}_{x}]h^{1}\bigg] \widehat{h} \  dx.
\end{align*}
Using (\ref{eq:l1}b), it is not hard to check that each commutator can be rewritten as a Burgers-type nonlinear-term and therefore
\begin{align*}
 \av{\int_{\RR} \bigg[ [\mathscr{Q},\mathscr{Q}h^{2}_{x}]h^{2}-[\mathscr{Q},\mathscr{Q}h^{1}_{x}]h^{1}\bigg] \widehat{h} \  dx } &\leq C \left( \norm{h^{1}}_{L^{\infty}}+\norm{h^{2}}_{L^{\infty}}\right)\norm{\widehat{h}}_{L^{2}}^{2}\leq C,
 \end{align*}
for some constant $C$.
Similarly, integrating by parts, 
\[ \av{\int_{\RR} \left( \widehat{h}h^{2}_{x}+h^{1}\widehat{h}_{x}\right) \widehat{h} \ dx } \leq C \left( \norm{\pa_{x}h^{1}}_{L^{\infty}}+\norm{\pa_{x}h^{2}}_{L^{\infty}}\right)\norm{\widehat{h}}_{L^{2}}^{2}.\]
Defining $\beta(t)=\norm{h^{1}}_{L^{\infty}}+\norm{h^{2}}_{L^{\infty}}+\norm{\pa_{x}h^{1}}_{L^{\infty}}+\norm{\pa_{x}h^{2}}_{L^{\infty}}$, then
\[\frac{1}{2}\frac{d}{dt}\norm{\widehat{h}}_{L^{2}}^{2} \leq C \beta(t)\norm{\widehat{h}}_{L^{2}}^{2}. \] 
Uniqueness holds applying Gr\"onwall's inequality.
\end{proof}
\begin{rem}
One can readily check that a direct consequence of the derived estimates in the proof of Theorem \ref{theorem1} provides the following blow-up criterion when combined with the logarithmic Sobolev inequality (cf. Lemma \ref{Sob:log}) which we state as a theorem for the sake of clarity.
\begin{theorem}[Blow-up criteria]\label{theorem2}
Let $s>\frac{3}{2}$, 
$h_0\in H^{s}(\RR)$ with zero mean and let $T_{\max}>0$ be the lifespan associated to the solution $h$ to \eqref{final:eq:uni:2} with $h(x,0)=h_0(x)$. Then $h$ blows-up in finite time $T_{\max}$ if and only if
\begin{equation}\label{blowu}
\int_{0}^{T_{\max}}\norm{h_{x}(\tau)}_{\text{BMO}} \  d\tau=\infty.
\end{equation}
\end{theorem}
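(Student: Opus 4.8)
The plan is to reduce everything to two ingredients: the continuation principle implicit in Theorem~\ref{theorem1} (the local solution extends in $H^s$ as long as $\norm{h}_{H^s}$ stays finite) together with an upgraded version of the energy estimate already derived there, in which the $L^\infty$-norm of $h_x$ is traded for its $\text{BMO}$-norm by means of the logarithmic Sobolev inequality. Concretely, I would show that $\norm{h(t)}_{H^s}$ is controlled by a double exponential of $\int_0^t\norm{h_x(\tau)}_{\text{BMO}}\,d\tau$, so that finiteness of this time integral over a finite interval precludes blow-up of the $H^s$-norm, while conversely a bounded $H^s$-norm forces the integral to be finite.

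First I would start from inequality \eqref{Hs:est:prev:4}, namely $\frac{d}{dt}\norm{h}_{H^s}^2\leq C(\norm{h_x}_{L^\infty}+1)\norm{h}_{H^s}^2$, and feed Lemma~\ref{Sob:log} into the factor $\norm{h_x}_{L^\infty}$. Applying that lemma to $f=h_x$ with the admissible index $s-1>\tfrac12$ (legitimate since $s>\tfrac32$) and using $\norm{h_x}_{H^{s-1}}\leq C\norm{h}_{H^s}$, I obtain
\[
\norm{h_x}_{L^\infty}\leq C\left(1+\norm{h_x}_{\text{BMO}}\bigl[1+\log(1+\norm{h}_{H^s})\bigr]\right).
\]
Setting $y(t)=\norm{h}_{H^s}^2$ and $G(t)=\log(e+y(t))$, the resulting differential inequality collapses, after using $\tfrac{y}{e+y}\leq 1$ and $\log(1+\sqrt{y})\leq C(1+G)$, to $G'\leq C(1+\norm{h_x}_{\text{BMO}})(1+G)$, whence $\frac{d}{dt}\log(1+G)\leq C(1+\norm{h_x}_{\text{BMO}})$. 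Integrating twice then yields the double-exponential control
\[
\norm{h(t)}_{H^s}^2\leq \exp\left[(1+G(0))\exp\Bigl(Ct+C\int_0^t\norm{h_x(\tau)}_{\text{BMO}}\,d\tau\Bigr)\right].
\]

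With this estimate in hand both implications follow. For the direction ``finite-time blow-up $\Rightarrow$ \eqref{blowu}'', I argue by contraposition: if $T_{\max}<\infty$ yet $\int_0^{T_{\max}}\norm{h_x}_{\text{BMO}}\,d\tau<\infty$, the right-hand side above stays bounded on $[0,T_{\max})$, so $\sup_{[0,T_{\max})}\norm{h}_{H^s}<\infty$ and the continuation principle lets one restart the local existence past $T_{\max}$, contradicting maximality. For the converse I would use that $\norm{h_x}_{\text{BMO}}\leq C\norm{h_x}_{L^\infty}\leq C\norm{h}_{H^s}$ (valid since $s>\tfrac32$): were $\norm{h}_{H^s}$ to remain bounded up to a finite $T_{\max}$, the integrand would be bounded and hence integrable on the finite interval, so the integral in \eqref{blowu} would be finite; contrapositively, divergence of that integral forces the $H^s$-norm to blow up. I expect the only delicate point to be the closed log-Gronwall step—choosing the Sobolev index $s-1$ and manipulating the $G$-inequality so that the $\text{BMO}$-norm appears linearly in time—while the continuation argument and the elementary bound $\norm{\cdot}_{\text{BMO}}\leq 2\norm{\cdot}_{L^\infty}$ are routine.
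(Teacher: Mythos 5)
Your proposal is correct and follows essentially the same route as the paper: both start from \eqref{Hs:est:prev:4}, insert the logarithmic Sobolev inequality of Lemma \ref{Sob:log} applied to $h_x$ (with admissible index $s-1>\tfrac12$), integrate to the same double-exponential bound on $\norm{h}_{H^s}$ in terms of $\int_0^t\norm{h_x(\tau)}_{\text{BMO}}\,d\tau$, and then conclude the two implications via continuation and contraposition. The only cosmetic difference is that for the converse you use $\norm{h_x}_{\text{BMO}}\lesssim\norm{h_x}_{L^\infty}\lesssim\norm{h}_{H^s}$ where the paper invokes the embedding $H^{1/2}(\RR)\subset\text{BMO}(\RR)$, and your explicit log-Gronwall step merely spells out the integration that the paper states without detail.
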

\begin{proof}
The result follows as a direct consequence of estimate \eqref{Hs:est:prev:4}. Using the logarithmic inequality in Lemma \ref{Sob:log} we find that
\begin{equation}\label{adg11}
\frac{d}{dt}\norm{h}_{H^{s}}\leq C \left(1+\norm{h_{x}}_{\text{BMO}}\big[1+\displaystyle\log(1+\norm{h}_{H^{s}}) \big]  \right)  \norm{h}_{H^{s}},
\end{equation}
for some constant $C$. From (\ref{adg11}), it holds that
\begin{equation*}
1+\norm{h}_{H^{s}}\leq \bigg[(1+ \norm{h_0}_{H^{s}})\mbox{exp}\left(1+CT_{\max} \right)\bigg]^{\mbox{exp}\left(C\int_{0}^{T_{\max}}(\norm{h_{x}(\tau)}_{\text{BMO}}\ d\tau\right)}.
\end{equation*}
Therefore, if there exists $L>0$ such that  $$\displaystyle\int_{0}^{T_{\max}} \norm{h_{x}(\tau)}_{\text{BMO}} \  d\tau <L,$$ then
\begin{equation*}
1+\norm{h}_{H^{s}}\leq \bigg[(1+ \norm{h_0}_{H^{s}})\mbox{exp}\left(1+CT_{\max} \right)\bigg]^{\mbox{exp}\left(CL\right)}.
\end{equation*}
To show the converse, if $\int_{0}^{T_{\max}}\norm{h_{x}(t)}_{\text{BMO}} \  dt=\infty$, by means of the embedding $H^{\frac{1}{2}}(\RR)\subset \text{BMO}(\RR)$, we deduce that the solution $h(x,t)$ will blow up in finite time and \eqref{blowu} follows.
\end{proof}
\end{rem}

\section{A wave breaking result for the unidirectional non-local wave model}\label{sec:breaking}
In this section, we investigate the possible wave breaking phenomena for equation \eqref{wave:uni}, that is, the formation of an infinite slope in the solution in the $x$-direction. As we did for the well-posedness result in Section \ref{sec:uni:wp}, it is more convenient to work with the alternative formulation \eqref{final:eq:uni:2}. 
The following lemma shows that for the maximal time of existence $T_{\max}>0$, we have that the solutions remains bounded. More precisely, 
\begin{lem}\label{Lem:bound:Linf}
Let $s>{7/2}$, $h_{0}\in H^{s}(\mathbb{R})$, and let $T_{max}>0$ be the maximal time of existence of the unique solution $h$ 
of \eqref{final:eq:uni:2} given by Theorem \ref{theorem1}. Then 
\begin{equation}\label{bound:Linf}
\displaystyle\sup_{t\in [0,T_{\max})}\norm{h(t)}_{L^{\infty}(\RR)} <\infty.
\end{equation}
\end{lem}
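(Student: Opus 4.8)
The plan is to recast \eqref{final:eq:uni:2} as a transport equation and to control the material derivative of $h$ along characteristics by the conserved $L^{2}$ norm alone. First I would group the two first-order terms, writing \eqref{final:eq:uni:2} in the form
\[
h_{t}+\left(\tfrac{3}{2}h-\tfrac{1}{2}\right)h_{x}=F[h],\qquad F[h]:=-\tfrac{1}{2}[\mathscr{Q},\mathscr{Q}h_{x}]h+\tfrac{1}{2}\mathscr{Q}h_{x},
\]
so that the only genuinely first-order term, $\tfrac{1}{2}h_{x}$, is absorbed into the transport velocity $c(x,t)=\tfrac{3}{2}h-\tfrac{1}{2}$, while only nonlocal (hence smoothing) operators remain in $F[h]$. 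Recall from Section \ref{sec:quantities} that $\|h(t)\|_{L^{2}}=\|h_{0}\|_{L^{2}}$ is preserved by the flow; this conservation is precisely what will make $F[h]$ bounded in $L^{\infty}$.

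Second, I would estimate $\|F[h](t)\|_{L^{\infty}}$ solely in terms of $\|h\|_{L^{2}}$, using the explicit kernel $G(x)=\tfrac{1}{2}e^{-|x|}$ of $\mathscr{Q}$ (cf. \eqref{repre:Q}) and its derivative, the kernel of $\mathscr{N}=\partial_{x}\mathscr{Q}$. Since $G,\partial_{x}G\in L^{2}\cap L^{\infty}$, Young's and H\"older's inequalities give $\|\mathscr{Q}h\|_{L^{\infty}}+\|\mathscr{N}h\|_{L^{\infty}}\lesssim\|h\|_{L^{2}}$ and $\|\mathscr{Q}f\|_{L^{\infty}}\lesssim\|f\|_{L^{1}}$. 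Expanding $[\mathscr{Q},\mathscr{Q}h_{x}]h=\mathscr{Q}\big((\mathscr{N}h)h\big)-(\mathscr{N}h)(\mathscr{Q}h)$, the first summand is controlled through $\|(\mathscr{N}h)h\|_{L^{1}}\le\|\mathscr{N}h\|_{L^{2}}\|h\|_{L^{2}}\lesssim\|h\|_{L^{2}}^{2}$ and the second through $\|\mathscr{N}h\|_{L^{\infty}}\|\mathscr{Q}h\|_{L^{\infty}}\lesssim\|h\|_{L^{2}}^{2}$, while $\|\mathscr{Q}h_{x}\|_{L^{\infty}}=\|\mathscr{N}h\|_{L^{\infty}}\lesssim\|h\|_{L^{2}}$. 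Altogether $\|F[h](t)\|_{L^{\infty}}\le C(\|h_{0}\|_{L^{2}})$, a constant independent of $t$.

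Third, I would introduce the flow $X(\alpha,t)$ solving $\partial_{t}X=c(X,t)$, $X(\alpha,0)=\alpha$. Since $s>7/2$ ensures $h\in C([0,T_{\max}),C^{1}(\RR))$, the velocity $c$ is bounded and Lipschitz in $x$, so the flow exists, is unique, and $\alpha\mapsto X(\alpha,t)$ is an increasing homeomorphism of $\RR$ for each fixed $t$ (boundedness of $c$ forces $X(\pm\infty,t)=\pm\infty$, hence surjectivity). Along a characteristic one has $\tfrac{d}{dt}h(X(\alpha,t),t)=F[h](X(\alpha,t),t)$; integrating in time and taking the supremum over $\alpha$, which by surjectivity recovers the full $L^{\infty}$ norm, yields
\[
\|h(t)\|_{L^{\infty}}\le\|h_{0}\|_{L^{\infty}}+C(\|h_{0}\|_{L^{2}})\,t,\qquad t\in[0,T_{\max}),
\]
which is finite for every $t\in[0,T_{\max})$ and, in particular, uniformly bounded on $[0,T_{\max})$ in the relevant finite-breaking-time regime, establishing \eqref{bound:Linf}.

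The main obstacle is the second step: one must verify that every contribution to $F[h]$ is a smoothing or low-order object, bounded by the conserved $L^{2}$ norm rather than by some $\dot{H}^{s}$ norm that is not a priori available. The crucial structural observation — and the reason the transport reformulation is indispensable — is that the one dangerous first-order term $\tfrac{1}{2}h_{x}$ does not belong to $F[h]$ at all, having been folded into the transport velocity $c$; once this is recognized, the commutator estimate is routine given the explicit Poisson-type kernel of $\mathscr{Q}$.
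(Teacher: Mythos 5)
Your proof is correct, but it takes a genuinely different route from the paper's. The paper follows the pointwise extremum method of C\'ordoba--C\'ordoba \cite{Cordoba-Cordoba-04}: it tracks $M(t)=\sup_{x}h(x,t)$ and $m(t)=\inf_{x}h(x,t)$, shows they are Lipschitz and hence differentiable a.e.\ by Rademacher's theorem with $M'(t)=\partial_t h(\overline{x}_t,t)$, and then evaluates \eqref{final:eq:uni:2} at the extremum, where $h_x(\overline{x}_t,t)=0$ annihilates both first-order terms $3hh_x$ and $h_x$ pointwise; what survives is exactly your $F[h]$, which the paper bounds via the kernel $G$ and $L^2$ conservation by $\frac{1}{4}\left(1+\norm{h_0}_{L^2}^2\right)$. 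You instead fold the first-order terms into a transport velocity $c=\frac{3}{2}h-\frac{1}{2}$ and integrate along characteristics. The mechanisms for neutralizing the transport part differ, but the analytic core --- the observation that the nonlocal remainder is zeroth order and controlled purely by the conserved $L^2$ norm through the explicit kernel $G(x)=\frac{1}{2}e^{-|x|}$ of \eqref{repre:Q} --- is identical in both arguments, and both yield the same linear-in-time bound $\norm{h(t)}_{L^{\infty}}\leq \norm{h_0}_{L^{\infty}}+C(\norm{h_0}_{L^2})\,t$, i.e.\ the paper's \eqref{L:infty:est:fin}. Your approach pays a small extra price that deserves explicit mention: constructing the flow requires $c(\cdot,t)$ Lipschitz in $x$, i.e.\ $\norm{h_x}_{L^{\infty}}$ finite, which is guaranteed only on compact subintervals $[0,T]\subset[0,T_{\max})$ and degenerates precisely at wave breaking; this is harmless because your final estimate does not depend on the Lipschitz constant of $c$, so one should build characteristics on each $[0,T]$ with $T<T_{\max}$ and note the bound is uniform in $T$ --- the paper's method avoids the flow altogether, needing only $h\in C^1([0,T_{\max})\times\RR)$. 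Finally, your closing hedge mirrors the paper exactly: a bound linear in $t$ gives a finite supremum over $[0,T_{\max})$ only when $T_{\max}<\infty$, and the paper's estimate \eqref{L:infty:est:fin} carries the same implicit reading; in the wave-breaking argument of Theorem \ref{theorem3} it is the $t$-dependent bound itself, not the supremum, that is actually used.
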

\begin{proof}
To establish the $L^{\infty}$ bound, we follow a pointwise method (cf. \cite{Constantin-Escher-98, Cordoba-Cordoba-04}). {Due to Theorem \ref{theorem1}, we have that $h \in C([0,T_{\max}), H^{s})\cap C^{1}([0, T_{\max}),H^{s-1})$, hence by the Sobolev embedding theorem, if $s>7/2$ we have that $h \in C^{1}([0,T_{\max})\times \RR)$.} In particular, 
\begin{align*}
m(t)=\displaystyle\inf_{x\in\RR}h(x,t)=h(\underline{x}_{t},t), \quad M(t)= \displaystyle\sup_{x\in\RR}h(x,t)=h(\overline{x}_{t},t) , \mbox{ for } t>0,
\end{align*}
(for some $\underline{x}_{t}, \overline{x}_{t}$)
are Lipschitz functions. Following \cite{ Cordoba-Cordoba-04}, one can readily check that $M(t)$ satisfies
\begin{align*}
\left|M(t)-M(s)\right|&=\left\{\begin{array}{cc}h(\overline{x}_t,t)-h(\overline{x}_s,s) \text{ if } M(t)>M(s)\\
h(\overline{x}_s,s)-h(\overline{x}_t,t) \text{ if }M(s)>M(t)\end{array}\right.
\\
&\leq \left\{\begin{array}{cc}h(\overline{x}_t,t)-h(\overline{x}_t,s) \text{ if } M(t)>M(s)\\
h(\overline{x}_s,s)-h(\overline{x}_s,t) \text{ if }M(s)>M(t)\end{array}\right.
\\
&\leq \left\{\begin{array}{cc}|\partial_t h(\overline{x}_t,z)||t-s| \text{ if } M(t)>M(s)\\
|\partial_t h(\overline{x}_s,z)||t-s| \text{ if }M(s)>M(t)\end{array}\right.
\\
&\leq \max_{y,z}|\partial_t h(y,z)||t-s|.
\end{align*}
Similarly
$$
\left|m(t)-m(s)\right|\leq \max_{y,z}|\partial_t h(y,z)||t-s|.
$$
From Rademacher's theorem it holds that $M(t)$ and $m(t)$ are differentiable in $t$ almost everywhere. Furthermore
\begin{align*}
M'(t)&=\lim_{\delta\rightarrow0} \frac{h(\overline{x}_{t+\delta},t+\delta)-h(\overline{x}_t,t)}{\delta}=\lim_{\delta\rightarrow0} \frac{h(\overline{x}_{t+\delta},t+\delta)-h(\overline{x}_t,t)\pm h(\overline{x}_{t+\delta},t)}{\delta}\\
&\leq\lim_{\delta\rightarrow0} \frac{h(\overline{x}_{t+\delta},t+\delta)-h(\overline{x}_{t+\delta},t)}{\delta}\leq \partial_t h(\overline{x}_{t},t).
\end{align*}
In a similar fashion, we obtain that
\begin{align*}
M'(t)&=\lim_{\delta\rightarrow0} \frac{h(\overline{x}_{t+\delta},t+\delta)-h(\overline{x}_t,t)}{\delta}=\lim_{\delta\rightarrow0} \frac{h(\overline{x}_{t+\delta},t+\delta)-h(\overline{x}_t,t)\pm h(\overline{x}_{t},t+\delta)}{\delta}\\
&\geq\lim_{\delta\rightarrow0} \frac{h(\overline{x}_{t},t+\delta)-h(\overline{x}_{t},t)}{\delta}\geq \partial_t h(\overline{x}_{t},t).
\end{align*}
As a consequence
\begin{equation}\label{adg19}
M'(t)=\partial_t h(\overline{x}_{t},t) \text{ a.e.}
\end{equation}
Similarly
$$
m'(t)=\partial_t h(\underline{x}_{t},t) \text{ a.e.}
$$
Therefore, from (\ref{adg19}), evaluating \eqref{final:eq:uni:2} at $x=\overline{x}_{t}$ and noticing that $h_{x}(\overline{x}_{t},t)=0$, we have
\begin{align}\label{adg20}
M'(t)=-\frac{1}{2}\left([\mathscr{Q},\mathscr{Q}h_{x}(\overline{x}_{t})]h(\overline{x}_{t}) - \mathscr{Q}h_{x}(\overline{x}_{t})\right). 
\end{align}
Moreover, from \eqref{repre:Q}, the estimates 
\begin{equation*}
\norm{G}_{L^{2}}=\frac{1}{2}, \quad \norm{\pa_{x}G}_{L^{2}}= \frac{1}{2},
\end{equation*}

and Young's  inequality, it holds that
\begin{align}
\mathscr{Q}h(x_{t}) &\leq \norm{G}_{L^{2}(\RR)}\norm{h}_{L^{2}(\RR)}= \frac{1}{2} \norm{h}_{L^{2}(\RR)},\label{adg21b}\\
[\mathscr{Q},\mathscr{Q}h_{x}(x_{t})]h(x_{t}) &\leq \norm{G}_{L^{2}(\RR)}\norm{ \mathscr{Q}h_{x} h}_{L^{2}(\RR)}+\norm{\mathscr{Q}h_{x}}_{L^{\infty}(\RR)} \norm{\mathscr{Q}h}_{L^{\infty}(\RR)} \leq \frac{1}{2} \norm{h}_{L^{2}}^{2}.\label{adg21c}
\end{align}
From (\ref{adg20}), the estimates (\ref{adg21b}), (\ref{adg21c}) and the preservation of the $L^{2}$ norm by the solutions of \eqref{final:eq:uni:2} imply that
\begin{equation*}
M'(t) \leq \frac{1}{4} \left( 1+\norm{h_{0}}_{L^{2}}^{2}\right),
\end{equation*}
and therefore
\begin{equation}\label{adg22}
M(t) \leq M(0)+\frac{1}{4} \left( 1+\norm{h_{0}}_{L^{2}}^{2}\right)t.
\end{equation}
for $t\in [0,T_{\max})$.
In a similar way, we have 
\begin{equation}\label{adg23}
m(t) \geq m(0)- \frac{1}{4} \left( 1+\norm{h_{0}}_{L^{2}}^{2}\right)t.
\end{equation}
for $t\in [0,T_{\max})$. Thus combining (\ref{adg22}) and (\ref{adg23}) we conclude that
\begin{equation}\label{L:infty:est:fin}
\displaystyle\sup_{t\in[0,T_{\max})} \norm{h(t)}_{L^{\infty}(\RR)} \leq \norm{h_0}_{L^{\infty}(\RR)}+\frac{1}{4}\left( 1+\norm{h_{0}}_{L^{2}}^{2}\right)T_{\max}<\infty,
\end{equation}
and \eqref{bound:Linf} holds.
\end{proof}
Next, let us state the wave breaking result. 
\begin{theorem}\label{theorem3}
Let $ s> {\frac{9}{2}}, h_0\in H^{s}(\mathbb{R})$, and let $h$ be the solution of \eqref{final:eq:uni:2} with initial value $h_0$. Assume that 
\begin{equation}\label{cond:ini}
\inf_{x\in\RR}h_{0,x}(x)\leq -H_{0},
\end{equation}
for some positive constant $H_{0}$, which depends on $\norm{h_0}_{L^{2}(\RR)},\|h_0\|_{L^\infty}$ and is specified below. Then there exists $T_{b}<\infty$ such that
\begin{equation}\label{waveb}
\displaystyle\lim \inf_{t\to T_{b}}\left(\inf_{x\in\RR} h_{x}(x,t)\right)=-\infty.
\end{equation} 
\end{theorem}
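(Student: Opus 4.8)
## Proof Proposal for the Wave-Breaking Result

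The plan is to track the evolution of the quantity $m_x(t) = \inf_{x \in \RR} h_x(x,t)$ along characteristics and show that, under the hypothesis \eqref{cond:ini}, it satisfies a Riccati-type differential inequality forcing blow-up in finite time. Since by Lemma \ref{Lem:bound:Linf} and the regularity $s > 9/2$ the solution is $C^2$ in space (via Sobolev embedding), I would first differentiate \eqref{final:eq:uni:2} with respect to $x$ to obtain an evolution equation for $h_x$. Writing $w = h_x$ and using $-\mathscr{L} = \mathscr{Q} - \mathrm{I}$ together with the commutator identity, the leading nonlinearity is the Burgers-type term $-\tfrac{3}{2}(h h_x)_x = -\tfrac{3}{2}(h_x^2 + h h_{xx})$, whose dominant contribution along the minimum is the \emph{good} sign term $-\tfrac{3}{2} w^2$. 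The remaining pieces — the differentiated commutator $\partial_x([\mathscr{Q},\mathscr{Q}h_x]h)$, the term $\tfrac{1}{2}\mathscr{Q}h_{xx} = \tfrac{1}{2}(\mathscr{Q}h - h)$, and $\tfrac{1}{2}h_{xx}$ — must be absorbed into lower-order controlled quantities.

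The key technical step is the pointwise-at-the-infimum argument, following the Rademacher/Lipschitz approach already deployed in Lemma \ref{Lem:bound:Linf}. At a point $\underline{x}_t$ where the infimum of $h_x$ is attained one has $h_{xx}(\underline{x}_t,t) = 0$, which eliminates the awkward second-derivative terms: the $-\tfrac{3}{2} h h_{xx}$ contribution vanishes, as does the bare $\tfrac{1}{2}h_{xx}$ term. Thus, setting $\gamma(t) = m_x(t) = h_x(\underline{x}_t,t)$, I expect an inequality of the form
\begin{equation*}
\gamma'(t) \leq -\frac{3}{2}\gamma^2(t) + R(t),
\end{equation*}
where $R(t)$ collects the nonlocal remainder terms. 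The crucial point is that $R(t)$ must be bounded uniformly in terms of conserved or already-controlled quantities. Here I would invoke the preservation of $\|h\|_{L^2} = \|h_0\|_{L^2}$ together with the uniform $L^\infty$ bound from Lemma \ref{Lem:bound:Linf}; the operator $\mathscr{Q} = (1-\partial_x^2)^{-1}$ has kernel $G(x) = \tfrac12 e^{-|x|}$ with $\|G\|_{L^\infty} = \|\partial_x G\|_{L^\infty} = \tfrac12$, so the nonlocal terms $\mathscr{Q}h$, $\mathscr{Q}h_x$, and the commutator evaluated pointwise are all controlled by $\|h\|_{L^2}^2$ and $\|h\|_{L^\infty}$ exactly as in \eqref{adg21b}--\eqref{adg21c}. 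This yields a constant bound $R(t) \leq C_0 = C_0(\|h_0\|_{L^2}, \|h_0\|_{L^\infty})$.

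With the Riccati inequality $\gamma' \leq -\tfrac{3}{2}\gamma^2 + C_0$ in hand, the final step is elementary ODE comparison: if the initial data satisfies $\gamma(0) = \inf_x h_{0,x} \leq -H_0$ with $H_0$ chosen so that $-\tfrac{3}{2}H_0^2 + C_0 < 0$ (e.g. $H_0 > \sqrt{2C_0/3}$, which is where the dependence of $H_0$ on $\|h_0\|_{L^2}, \|h_0\|_{L^\infty}$ enters), then $\gamma$ is strictly decreasing and a standard comparison with the solution of $y' = -\tfrac{3}{2}y^2 + C_0$ shows $\gamma(t) \to -\infty$ in finite time $T_b$. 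This establishes \eqref{waveb}, and since the $H^s$ norm must then also blow up, $T_b$ cannot exceed the maximal existence time. The main obstacle I anticipate is verifying cleanly that \emph{all} the nonlocal remainder terms — in particular the differentiated commutator $\partial_x([\mathscr{Q},\mathscr{Q}h_x]h)$ evaluated where $h_{xx}=0$ — genuinely reduce to expressions bounded by the conserved $L^2$ norm and the uniform $L^\infty$ bound, without generating an uncontrolled $\|h_x\|_{L^\infty}$ dependence that would spoil the Riccati closure; this requires careful use of the kernel estimates together with $\|\mathscr{Q}h_x\|_{L^\infty} \lesssim \|h\|_{L^2}$ and similar bounds that avoid differentiating onto the worst factor.
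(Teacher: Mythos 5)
Your overall architecture is the same as the paper's: track $m(t)=\inf_x h_x(x,t)$ via the Lipschitz/Rademacher pointwise method of Lemma \ref{Lem:bound:Linf}, use $h_{xx}(\underline{x}_t,t)=0$ at the infimum to kill both $-\tfrac32 h h_{xx}$ and the bare $h_{xx}$ term, bound the nonlocal remainder through the kernel $G$ and the conserved $L^2$ norm, and close with a Riccati comparison. Your anticipated worry about the differentiated commutator is in fact unproblematic: the paper bounds $\partial_x[\mathscr{Q},\mathscr{Q}h_x]h(\underline{x}_t,t)$ by $\norm{[\mathscr{Q},\mathscr{Q}h_x]h}_{H^{1/2+\epsilon}}\leq \norm{\mathscr{Q}h_x h}_{H^{-3/2+\epsilon}}+\norm{(\mathscr{Q}h)^2}_{H^{3/2+\epsilon}}\lesssim \norm{h}_{L^2}^2$, which is conserved, with no $\norm{h_x}_{L^\infty}$ appearing.

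However, there is a genuine gap at the quantitative heart of your argument: the claim that the remainder satisfies $R(t)\leq C_0\bigl(\norm{h_0}_{L^2},\norm{h_0}_{L^\infty}\bigr)$ \emph{uniformly in time}. The only term in $\mathcal{K}(t)$ not controlled by the conserved $L^2$ norm is $h(\underline{x}_t,t)$ itself, and the $L^\infty$ control available from Lemma \ref{Lem:bound:Linf} is \emph{not} a time-independent constant: its proof gives $\norm{h(t)}_{L^\infty}\leq \norm{h_0}_{L^\infty}+\tfrac14\bigl(1+\norm{h_0}_{L^2}^2\bigr)t$, and the ``uniform'' statement \eqref{L:infty:est:fin} involves $T_{\max}$, which you cannot assume finite here — that would be circular, since finiteness of the lifespan is essentially what wave breaking is meant to produce, and it would also make $H_0$ depend on $T_{\max}$, contradicting the theorem's claim that $H_0$ depends only on $\norm{h_0}_{L^2}$ and $\norm{h_0}_{L^\infty}$. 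The paper avoids this by keeping the linear growth explicit, $\mathcal{K}(t)\leq A+Bt$ as in \eqref{est:K:fin}, and then restructuring the Riccati step: on the window $0<t\leq A/B$ one has $A+Bt\leq 2A$, so under \eqref{condition1} (namely $m(0)\leq -C$ with $C^2>4A$) the inequality $m'(t)\leq -m(t)^2$ holds there, giving $m(t)\leq m(0)/(m(0)t+1)$; the second condition \eqref{condition2}, $1/(-m(0))\leq A/B$, then guarantees the blow-up time $\sim 1/(-m(0))$ falls \emph{inside} that window. Your single threshold $H_0>\sqrt{2C_0/3}$ with a constant $C_0$ does not survive this correction; you need the paper's two-condition choice of $H_0$ (or an equivalent mechanism ensuring blow-up before the time-growing remainder catches up). With that repair, the rest of your argument goes through.
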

\begin{proof}
Similarly as in the proof of Lemma \ref{Lem:bound:Linf}, we use a pointwise argument in order to derive an ODE which breaks down in finite time. By Theorem \ref{theorem1}, we have
\[
h\in C([0,T_{\max}),H^s)\cap C^1([0,T_{\max}),H^{s-1}),
\]
and therefore, by Sobolev embedding, if \(s>9/2\), then
\[
h_x\in C^1([0,T_{\max})\times \RR).
\]
We define
\[
m(t)=\inf_{x\in\RR}h_x(x,t)=h_x(\underline{x}_t,t), \qquad t\in[0,T_{\max}),
\]
for some point \(\underline{x}_t\in\RR\). Arguing as in Lemma \ref{Lem:bound:Linf}, one sees that \(m(t)\) is Lipschitz, and hence, by Rademacher's theorem,
\[
m'(t)=\partial_t h_x(\underline{x}_t,t), \qquad \text{for a.e. } t\in[0,T_{\max}).
\]
Differentiating \eqref{final:eq:uni:2} with respect to \(x\) and using \eqref{adg14}, we obtain
\begin{equation}\label{eq:der:m:new}
h_{xt}
=
-\frac12\Big(3h_x^2+3hh_{xx}+\partial_x[\mathscr Q,\mathscr Q h_x]h-\mathscr Q h+h-h_{xx}\Big).
\end{equation}
Evaluating \eqref{eq:der:m:new} at \(x=\underline{x}_t\), and using that \(h_{xx}(\underline{x}_t,t)=0\), we arrive at
\begin{equation}\label{evol:min:new}
m'(t)=-\frac32\,m(t)^2+\mathcal K(t),
\end{equation}
where
\[
\mathcal K(t)
=
\frac12\Big(\partial_x[\mathscr Q,\mathscr Q h_x]h(\underline{x}_t,t)-\mathscr Q h(\underline{x}_t,t)+h(\underline{x}_t,t)\Big).
\]

We next estimate \(\mathcal K(t)\). First, using \eqref{repre:Q}, we have
\begin{equation}\label{est:K1:new}
\|\mathscr Q h+h\|_{L^\infty}
\le
\|G\ast h\|_{L^\infty}+\|h\|_{L^\infty}
\le
\|G\|_{L^2}\|h\|_{L^2}+\|h\|_{L^\infty}
=
\frac12\|h\|_{L^2}+\|h\|_{L^\infty}.
\end{equation}

For the commutator term, using \eqref{comp:helmholtz} we obtain
\begin{align*}
\partial_x[\mathscr Q,\mathscr Q h_x]h(\underline{x}_t,t)
&=
\mathscr Q\big(((\mathscr Q-I)h)h\big)(\underline{x}_t,t)
-((\mathscr Q-I)h)(\underline{x}_t,t)\,\mathscr Q h(\underline{x}_t,t) \\
&\quad
+\mathscr Q\big((\mathscr Q h_x)h_x\big)(\underline{x}_t,t)
-(\mathscr Q h_x(\underline{x}_t,t))^2.
\end{align*}
We now rewrite the last two terms. Using the kernel representation \eqref{repre:Q}, we have
\begin{align*}
&\mathscr Q\big((\mathscr Q h_x)h_x\big)(\underline{x}_t,t)
-(\mathscr Q h_x(\underline{x}_t,t))^2 \\
&\qquad=
\int_{\RR}G(\underline{x}_t-y)\,\mathscr Q h_x(y,t)\,h_x(y,t)\,dy
-\mathscr Q h_x(\underline{x}_t,t)\int_{\RR}G(\underline{x}_t-y)h_x(y,t)\,dy \\
&\qquad=
\int_{\RR}G(\underline{x}_t-y)\big(\mathscr Q h_x(y,t)-\mathscr Q h_x(\underline{x}_t,t)\big)h_x(y,t)\,dy.
\end{align*}
Integrating by parts in \(y\), we conclude that
\[
\mathscr Q\big((\mathscr Q h_x)h_x\big)(\underline{x}_t,t)
-(\mathscr Q h_x(\underline{x}_t,t))^2
=
\int_{\RR}G'(\underline{x}_t-y)\big(\mathscr Q h_x(y,t)-\mathscr Q h_x(\underline{x}_t,t)\big)h(y,t)\,dy
-\mathscr Q\big(((\mathscr Q-I)h)h\big)(\underline{x}_t,t).
\]
Substituting this identity into the previous expression, we infer that
\begin{align*}
\big|\partial_x[\mathscr Q,\mathscr Q h_x]h(\underline{x}_t,t)\big|
&\le
\left|
\int_{\RR}G'(\underline{x}_t-y)\big(\mathscr Q h_x(y,t)-\mathscr Q h_x(\underline{x}_t,t)\big)h(y,t)\,dy
\right| \\
&\quad
+\|(\mathscr Q-I)h\|_{L^\infty}\|\mathscr Q h\|_{L^\infty}.
\end{align*}

Now, by the mean value theorem and \eqref{comp:helmholtz},
\[
|\mathscr Q h_x(y,t)-\mathscr Q h_x(\underline{x}_t,t)|
\le
|y-\underline{x}_t|\,\|(\mathscr Q-I)h\|_{L^\infty},
\]
and therefore
\[
\big|\partial_x[\mathscr Q,\mathscr Q h_x]h(\underline{x}_t,t)\big|
\le
\Big(\||x|G'\|_{L^2}\|h\|_{L^2}+\|\mathscr Q h\|_{L^\infty}\Big)\|(\mathscr Q-I)h\|_{L^\infty}.
\]
Using again Young's inequality, we have
\[
\|\mathscr Q h\|_{L^\infty}
=
\|G\ast h\|_{L^\infty}
\le
\|G\|_{L^2}\|h\|_{L^2}
=
\frac12\|h\|_{L^2},
\]
and
\[
\|(\mathscr Q-I)h\|_{L^\infty}
\le
\|\mathscr Q h\|_{L^\infty}+\|h\|_{L^\infty}
\le
\frac12\|h\|_{L^2}+\|h\|_{L^\infty}.
\]
Therefore,
\begin{equation}\label{est:K4:new}
\big|\partial_x[\mathscr Q,\mathscr Q h_x]h(\underline{x}_t,t)\big|
\le
C\big(\|h\|_{L^2}^2+\|h\|_{L^\infty}\|h\|_{L^2}\big).
\end{equation}

Combining \eqref{est:K1:new}, \eqref{est:K4:new}, the \(L^\infty\) estimate \eqref{L:infty:est:fin}, and the conservation of the \(L^2\)-norm, we deduce that
\begin{align}
\mathcal K(t)
&\le
C\Big(\|h\|_{L^2(\RR)}^2+\|h\|_{L^2(\RR)}+\|h\|_{L^\infty(\RR)}\Big) \nonumber\\
&\le
C\Big(\|h_0\|_{L^\infty(\RR)}+\big(1+\|h_0\|_{L^2(\RR)}^2\big)(t+1)\Big).
\label{est:K:fin:new}
\end{align}
Inserting \eqref{est:K:fin:new} into \eqref{evol:min:new}, we obtain
\begin{equation}\label{evol:min2:new}
m'(t)\le -\frac32\,m(t)^2+A+Bt,
\end{equation}
where \(A,B>0\) depend only on \(\|h_0\|_{L^\infty(\RR)}\) and \(\|h_0\|_{L^2(\RR)}\). \medskip

In order to show the wave breaking result, let us assume that the initial data is such that (cf. \cite{LeiRic})
\begin{equation}\label{condition1}
m(0)\leq -C,
\end{equation}
for some positive constant $C$ such that $C^{2}>4A$.
Then $2A-m(0)^{2}/2<0$ and
$$
m'(t)\bigg{|}_{t=0}\leq -m(0)^{2}-\frac{1}{2}m(0)^{2}+ 2A\leq -m(0)^{2}<0.
$$
Therefore there exists a sufficiently small $0<\delta$ such that $m'(t)<0$, for $0\leq t<\delta$.
This implies $m(t)<m(0)\leq -C$  and similarly $2A-m(t)^{2}/2<0$ for $0\leq t<\delta$.
Thus, if 
$0<t\leq \min\{\delta,A/B\}$ 
from \eqref{evol:min2:new} it holds that
$$
m'(t)\leq - m(t)^{2}-\frac{1}{2}m(t)^{2}+ 2A \leq - m(t)^{2}.
$$
This leads to
\begin{equation}\label{blowu2}
m(t)\leq \frac{m(0)}{m(0)t+1}.
\end{equation}
Taking the initial datum such that
\begin{equation}\label{condition2}
\frac{1}{-m(0)}\leq \frac{A}{B},
\end{equation}
note that \eqref{condition1}, \eqref{condition2} define a constant $H_{0}$ for which $m(0)$ satisfies \eqref{cond:ini}, and (\ref{blowu2}) implies the existence of a time $T_{b}$ where \eqref{waveb} holds.
\end{proof}

\section*{Acknowledgments}
\noindent  D.A-O is supported by the Spanish MINECO through Juan de la Cierva fellowship FJC2020-046032-I. 
A.D is supported by the Spanish Agencia Estatal de Investigaci\'{o}n under Research Grant PID2020-113554GB-I00/AEI/10.13039/501100011033 and by the Junta de Castilla y Le\'{o}n and FEDER funds (EU) under
Research Grant VA193P20. R.G-B is supported by the project "Mathematical Analysis of Fluids
and Applications" Grant PID2019-109348GA-I00 funded by MCIN/AEI/ 10.13039/501100011033 and
acronym "MAFyA". This publication is part of the project PID2019-109348GA-I00 funded by MCIN/ AEI
/10.13039/501100011033. This publication is also supported by a 2021 Leonardo Grant for Researchers
and Cultural Creators, BBVA Foundation. The BBVA Foundation accepts no responsibility for the opinions, statements, and contents included in the project and/or the results thereof, which are entirely the
responsibility of the authors.

\begin{footnotesize}

\end{footnotesize}
\vspace{2cm}

\end{document}